\newcommand{\C}{\mathbb{C}}
\newcommand{\R}{\mathbb{R}}
\newcommand{\Z}{\mathbb{Z}}
\newcommand{\N}{\mathbb{N}}
\newcommand{\e}{\mathfrak{e}}
\newcommand{\f}{\mathfrak{f}}
\newcommand{\g}{\mathfrak{g}}
\newcommand{\p}{\mathfrak{p}}
\newcommand{\n}{\mathfrak{n}}
\newcommand{\s}{\mathfrak{s}}
\newcommand{\m}{\mathfrak{m}}
\renewcommand{\a}{\mathfrak{a}}
\renewcommand{\k}{\mathfrak{k}}
\renewcommand{\d}{\mathfrak{d}}
\newcommand{\ad}{{\rm ad}}
\newcommand{\adj}{{\rm Ad}}
\renewcommand{\L}{{\rm Lie}}
\newcommand{\Aut}{{\rm Aut}}
\newcommand{\Iso}{{\rm Iso}}
\renewcommand{\O}{\mathcal{O}}
\def\qq{/\kern-.185em /}
\def\hlinewd#1{%
\noalign{\ifnum0=`}\fi\hrule \@height #1 %
\futurelet\reserved@a\@xhline}
\title{Quotients of the crown domain by a proper action of a cyclic group.}
\author{Sara Vitali}
\begin{document}

\pagestyle{empty}

\theoremstyle{plain}
\newtheorem{teo}{Theorem}[section]
\newtheorem*{thm*}{Theorem}
\newtheorem*{pro*}{Proposition}
\newtheorem{lemma}[teo]{Lemma}
\newtheorem{prop}[teo]{Proposition}
\newtheorem{cor}[teo]{Corollary}
\newtheorem{claim}[teo]{Claim}

\theoremstyle{definition}
\newtheorem{defi}[teo]{Definition}
\newtheorem{rmk}[teo]{Remark}

\theoremstyle{remark}
\newtheorem{ex}[teo]{{\rm EXAMPLE}}

\maketitle

\pagestyle{plain}

\begin{abstract} Let $G/K$ be an irreducible Riemannian symmetric space of the non-compact type and denote by $\Xi$ the associated crown domain. We show that for any proper action of a cyclic group $\Gamma$ the quotient $\Xi/\Gamma$ is Stein. An analogous statement holds true for discrete nilpotent subgroups of a maximal split-solvable subgroup of $G$. We also show that $\Xi$ is taut.
\end{abstract}

\section*{Introduction}

\

Let $G/K$ be an irreducible Riemannian symmetric space of the non-compact type, where $G$ is assumed to be embedded in its universal complexification
$G^\C$. In \cite{AkGi90} D. N. Akhiezer and S. G. Gindikin
pointed out a distinguished  invariant domain $\Xi$  of $G^\C/K^\C$
containing  $G/K$ (as a maximal  totally-real submanifold)  such that
the extended (left) $G$-action on $\Xi$ is proper.
The domain $\Xi$, which is usually referred to as the crown domain or the Akhiezer-Gindikin domain, is Stein and Kobayashi hyperbolic by a result
of D. Burns, S. Hind and S. Halverscheid (\cite{BHH03}, cf. \cite{Bar03},  \cite{KrSt05}). In fact G. Fels and A.T. Huckleberry (\cite{FeHu05}) have shown that
with respect to these properties it is the maximal
$G$-invariant complexification of $G/K$ in $G^\C/K^\C$.
By using the characterization of the $G$-invariant, plurisubharmonic functions on $\Xi$
given in \cite{BHH03}, here we also note that $\Xi$ is taut (Prop.~\ref{TAUT}).
It seems not to be known whether all crown domains are complete
Kobayashi hyperbolic.

The crown domain
can also be regarded as the maximal domain
in the tangent bundle of $G/K$ admitting an
adapted complex structure (see \cite{BHH03} for more details).
Recently it has been intensively investigated in
connection with the harmonic analysis of $G/K$
(see, e.g. \cite{GiKr02b}, \cite{KrSt04}, \cite{KrSt05},
\cite{KrOp08}).
Here we consider
particular complex manifolds associated to $\Xi$. Namely,
given a proper action on $\Xi$ of a discrete group $\Gamma$ of  biholomorphisms,  we
are interested in complex-geometric properties of the
quotient $\Xi/\Gamma$. If $\Gamma$ is finite  one knows
that  $\Xi/\Gamma$ is Stein by a classical result of H. Grauert and R. Remmert (\cite[Thm.~1,Ch.~V]{GrRe79}, cf.~\cite{Hei91}).

So the simplest interesting case is that of
an infinite cyclic group  $\Gamma \cong \Z$.
One should observe that $\Xi$ is biholomorphic to a
simply connected domain in $\C^n$.
For this, consider an Iwasawa decomposition $N\!AK$ of $G$.
Then $\Xi$ can be realized as
an $N\! A$-invariant domain in the universal complexification
of $N\! A$, which is
biholomorphic to a complex affine space (cf. Sect. 5).
In this setting one may ask the
following  more general question: given proper $\Z$-action on
a simply connected Stein domain $X$ of $\C^n$, is the quotient
 $X/\Z$ Stein ?
 This is not always the case; for instance
J. Winkelmann (\cite{Win90})
has given an example of a free and properly discontinuous action on
$\C^5$ such that the quotient is not holomorphically separable.
On the other hand if $X$ is a simply-connected, bounded, Stein domain of $\C^2$ and the $\Z$-action is induced by a (proper) $\R$-action,
 then $X/\Z$ is Stein by a result of C. Miebach and
K. Oeljeklaus (\cite{MiOe09}). Furthermore,  C. Miebach (\cite{Mie10})
has shown that if $X$ is a
homogeneous bounded
domain, then $X/\Z$ is Stein for any proper
$\Z$-action.
For $n \ge 2$
we are not aware of any example of a simply connected,
bounded, Stein domain of $\C^n$, with a proper $\Z$-action such that
the quotient is not Stein.

The crown domain is either a Hermitian symmetric space of a larger group or it is rigid, i.e. the automorphism group of $\Xi$ coincides with the group of isometries of $G/K$
(\cite{BHH03}). Hence, the latter case is
a source of interesting examples of simply connected,
non-homogenous,  Stein
domains of $\C^n$ with a large authomorphism group.
Our main result is

\begin{thm*}
Let $G/K$ be an irreducible Riemannian symmetric space of the
non-compact type and let $\Xi$ be the associated crown domain.
Then $\Xi/\Z$ is a Stein manifold  for every proper $\Z$-action.
\end{thm*}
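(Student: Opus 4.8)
The plan is to reduce, via conjugations and passages to finite-index subgroups, to the case in which the $\Z$-action is generated by the time-one map of a \emph{proper $\R$-action} on $\Xi$, and then to construct a strictly plurisubharmonic exhaustion on the quotient.

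First, since $\Xi$ is Kobayashi hyperbolic (\cite{BHH03}), the group $\Aut(\Xi)$ is a Lie group acting properly on $\Xi$; hence a proper $\Z$-action is the same as an injective homomorphism $\Z\to\Aut(\Xi)$ with discrete image $\Gamma=\langle\varphi\rangle$, and such a $\Gamma$ acts freely and properly discontinuously. If $\varphi$ has finite order then $\Xi/\Gamma$ is Stein by \cite[Thm.~1,Ch.~V]{GrRe79}, and by the same result it suffices to prove the theorem after replacing $\Gamma$ by any finite-index subgroup; so I assume $\varphi$ has infinite order. By the alternative of \cite{BHH03}, either $\Xi$ is a Hermitian symmetric space of a larger group -- and then $\Xi$ is a bounded symmetric domain, in particular a homogeneous bounded domain, so $\Xi/\Gamma$ is Stein by Miebach's theorem \cite{Mie10} -- or $\Xi$ is rigid, i.e.\ $\Aut(\Xi)=\Iso(G/K)$. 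Only the rigid case remains.

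So let $\varphi\in\Iso(G/K)$. Passing to a finite-index subgroup we may assume $\varphi$ lies in the identity component $\Iso(G/K)^{\,0}$, which is the adjoint group of $G$ and acts properly on $\Xi$. Let $\varphi=\varphi_e\varphi_h\varphi_u$ be its complete multiplicative Jordan decomposition into commuting elliptic, hyperbolic and unipotent parts. A purely elliptic element generates a relatively compact, hence finite, subgroup, so $\varphi_h\varphi_u\neq e$. After a further finite-index passage (absorbing the component group of the centraliser of $\varphi_h\varphi_u$) we may write $\varphi=\exp Z$ with $Z=X+Y+W$, where $X$ is hyperbolic, $W$ nilpotent, $Y$ elliptic, the three pairwise commuting, and $X\neq 0$ or $W\neq 0$. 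Then $\mu_t:=\exp(tZ)$ is a one-parameter subgroup with $\mu_1=\varphi$, and it is \emph{closed} in $\Iso(G/K)$: its closure equals $\exp(\R X)\cdot\overline{\exp(\R Y)}\cdot\exp(\R W)$, the product of two closed one-parameter subgroups and a compact torus, and since $X\neq 0$ (or $W\neq 0$) the closed subgroup $\exp(\R X)$ (resp.\ $\exp(\R W)$) furnishes a proper parameter on $\{\mu_t\}$, which is therefore closed. Being a closed subgroup of a group acting properly on $\Xi$, $\{\mu_t\}$ defines a proper $\R$-action on $\Xi$ whose time-one map is $\varphi$.

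It thus remains to prove the following, which is the heart of the matter: \emph{if $X$ is a taut Stein manifold carrying a proper holomorphic $\R$-action $\{\mu_t\}$, then $X/\langle\mu_1\rangle$ is Stein.} A proper $\R$-action is free, so $X$ is a trivial principal $\R$-bundle over $S:=X/\R$, $X/\langle\mu_1\rangle\cong S\times(\R/\Z)$ diffeomorphically, and there is a smooth slice function $\tau\colon X\to\R$ with $\tau\circ\mu_t=\tau+t$, descending to a map $X/\langle\mu_1\rangle\to\R/\Z$. Starting from a strictly plurisubharmonic exhaustion $\sigma$ of the Stein manifold $X$, one must manufacture, out of $\sigma$ and the $\R$-action, a $\langle\mu_1\rangle$-invariant strictly plurisubharmonic function $u$ on $X$ whose sublevel sets have relatively compact image in $X/\langle\mu_1\rangle$; then Grauert's solution of the Levi problem exhibits $X/\langle\mu_1\rangle$ as Stein. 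The obstacle -- and the crux of the whole argument -- is precisely this construction: one must reconcile exact $\langle\mu_1\rangle$-invariance with properness modulo $\langle\mu_1\rangle$ and with strict plurisubharmonicity, while a naive average or supremum over the $\Z$-orbit $\{\sigma\circ\mu_n\}_{n\in\Z}$ either diverges or fails to be plurisubharmonic. Here tautness of $\Xi$ (Proposition~\ref{TAUT}) takes the place of the boundedness and homogeneity hypotheses used in \cite{MiOe09} and \cite{Mie10}: it makes the relevant families of holomorphic disks in $\Xi$ normal, which lets one run a pseudoconvexity/Levi-problem argument -- for instance on a Stein enlargement of $\Xi$ inside $G^\C/K^\C$ on which the $\R$-action complexifies -- and thereby build $u$ and descend it to the quotient.
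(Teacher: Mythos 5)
Your reduction steps (disposing of the finite-order and Hermitian cases via \cite{GrRe79} and \cite{Mie10}, passing to the identity component of $\Iso(G/K)$, and taking the Jordan decomposition of the generator) track the paper up to a point, but the argument then breaks down at exactly the step you yourself flag as ``the heart of the matter''. The claim that a proper holomorphic $\R$-action on a taut Stein manifold yields a Stein quotient by its time-one map is nowhere proved in your text: you describe the obstruction (reconciling $\langle\mu_1\rangle$-invariance, properness modulo $\langle\mu_1\rangle$, and strict plurisubharmonicity) and then assert that tautness ``lets one run a pseudoconvexity/Levi-problem argument''. No such argument is known. The result of \cite{MiOe09} for $\Z$-actions induced by $\R$-actions is confined to simply connected bounded Stein domains of $\C^2$, and the paper explicitly records that for $n>2$ the corresponding question is open; tautness of $\Xi$ (Prop.~\ref{TAUT}) is a side result that plays no role in the proof of the main theorem. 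There is also a secondary soft spot: you keep the elliptic part $\varphi_e$ inside your one-parameter group $\mu_t$, whereas the elliptic part is precisely what prevents the generator from being conjugated into a split-solvable subgroup; the paper removes it by comparing $\Xi/\Gamma$ with $\Xi/\langle\gamma_u\gamma_h\rangle$ through the common quotient by the compact torus $T=\overline{\langle\gamma_e\rangle}$ (cf.\ \cite[Prop.~3.6]{Mie10}), and your step ``absorb the component group and write $\varphi=\exp Z$'' is not justified as stated.

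The paper's route after the Jordan decomposition is algebraic and avoids the missing analytic step entirely. Once the generator is replaced by $\gamma'=\gamma_u\gamma_h$ and conjugated into $N\!A$, the crown is realized (via \cite[Thm.~1.3]{GiKr02a} and Lemma~\ref{MULT}, see Cor.~\ref{STEINTUBE}) as a Stein $N\!A$-invariant tube inside the universal complexification $N^\C\rtimes\widetilde{A^\C}$ of the split-solvable group $N\!A$. For a discrete cyclic (more generally, nilpotent) subgroup $\Gamma$ of a split-solvable group, the quotient of the universal complexification by $\Gamma$ is Stein (Prop.~\ref{S/Z QUOTIENT}, resting on Loeb's criterion, the syndetic hull, and Grauert's Oka principle); and $\Xi/\Gamma$ is locally Stein in $(N^\C\rtimes\widetilde{A^\C})/\Gamma$, so Docquier--Grauert (Lemma~\ref{D/Gamma}) concludes. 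To salvage your approach you would need an actual construction of the invariant strictly plurisubharmonic exhaustion on the quotient; as it stands, the proposal reduces the theorem to an unproved and, beyond dimension two, open statement.
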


An important ingredient in the proof is the above mentioned realization of $\Xi$ as an $N\!A$-invariant domain in the universal complexification
of $N\!A$. For this we recall that the crown domain is given
by  $\Xi= G \exp(i\omega)K^\C/K^\C$,
where  $\omega$ is the cell in the Lie algebra $\a$ of $A$
defined by
$$\omega:=\big\{X\in\a:\big|\alpha(X)\big|< \pi / 2, \;\text{ for every restricted root } \alpha \big \}.$$
By a result of S. Gindikin and B. Krötz  (\cite[Thm.~1.3]{GiKr02a}), the crown
$\Xi$ is contained in $N^\C\!A \exp(i\omega)K^\C/K^\C$. Thus, in order to realize $\Xi$ as an $N\!A$-invariant domain in the universal complexification of $N\!A$, it is sufficient to note that
the multiplication map $N^\C \times A \exp(i\omega) \to
G^\C/K^\C$, given by $(n,a) \to naK^\C$, is an open embedding
(Prop.~\ref{MULT}).
Then by following a strategy carried out in \cite{Mie10},
 one can reduce  to the case of $\Z$ contained in
 $N\!A$. Since $N\!A$ is a connected, split-solvable Lie group,
the quotient of the universal complexification of $N\! A$ by $\Z$ is Stein (Prop.~\ref{S/Z QUOTIENT}). Finally,
 $\Xi/\Z$ is locally Stein in such a Stein quotient
 and the proof of the above theorem follows by applying
a classical result of F. Docquier and H. Grauert (\cite{DoGr60}).

As pointed out to us by C. Miebach,
the above arguments also apply to show the following proposition
(Prop.~\ref{SUGG})

\begin{pro*}
Let $\Gamma$ be any discrete, nilpotent subgroup of $N\!A$.
Then $\Xi/\Gamma$ is Stein.
\end{pro*}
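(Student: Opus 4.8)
The plan is to repeat, with $\Z$ replaced by $\Gamma$, the argument used for the Main Theorem; the only genuinely new ingredient is a version of Proposition~\ref{S/Z QUOTIENT} for discrete nilpotent groups. First, since $\Gamma\subset N\!A\subset G$ and the $G$-action on $\Xi$ is proper, the $\Gamma$-action on $\Xi$ is properly discontinuous, and it is free because a discrete subgroup of the simply connected split-solvable group $N\!A$ is torsion free; thus $\Xi/\Gamma$ is a complex manifold. Using Proposition~\ref{MULT} one realizes $\Xi$ as an $N\!A$-invariant domain in the universal complexification $S^\C$ of $S:=N\!A$, with $N\!A$ acting by left translations; this action extends to the free, properly discontinuous action of $\Gamma$ on $S^\C$ by left translations, and the inclusion $\Xi\hookrightarrow S^\C$ descends to an open embedding $\Xi/\Gamma\hookrightarrow S^\C/\Gamma$.

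The heart of the matter is to prove that $S^\C/\Gamma$ is Stein. A discrete subgroup of a simply connected solvable Lie group is finitely generated, and a torsion-free nilpotent one is a cocompact lattice in its Malcev (syndetic) hull; hence $\Gamma$ is a cocompact lattice in a connected, simply connected, closed nilpotent Lie subgroup $M$ of $N\!A$ (after, if needed, replacing $\Gamma$ by a finite-index subgroup, which is harmless by the Grauert--Remmert theorem \cite{GrRe79} on finite quotients). Then $M^\C\subset S^\C$ is a closed complex subgroup, and the principal $M^\C$-bundle $S^\C\to S^\C/M^\C$ has contractible Stein base, since $S^\C/M^\C$ is a homogeneous space of a simply connected solvable complex Lie group by a closed connected complex subgroup; by Grauert's Oka principle this bundle is holomorphically trivial, so $S^\C/\Gamma\cong(M^\C/\Gamma)\times(S^\C/M^\C)$, and it suffices to show that $M^\C/\Gamma$ is Stein. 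This last point — the nilpotent analogue of Proposition~\ref{S/Z QUOTIENT} — I would prove by induction on $\dim M$: quotienting $M^\C$ by $\Gamma\cap Z(M)$, a lattice in the connected centre $Z(M)$, yields a holomorphic fibre bundle over $(M/Z(M))^\C$ whose fibre $Z(M)^\C/(\Gamma\cap Z(M))$ is a product of copies of $\C^*$ and $\C$, hence Stein; the residual action of $\Gamma/(\Gamma\cap Z(M))$ is again of the same kind, so by induction the base quotient is Stein, and one assembles a strictly plurisubharmonic exhaustion of $M^\C/\Gamma$ using theta-type functions in the central directions, which converge precisely because $\Gamma$ lies in a real form of $M^\C$ (so that the central fibres are half-dimensional lattice quotients rather than compact tori).

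It remains to transfer Steinness to $\Xi/\Gamma$. The crown $\Xi$ is a Stein open subset of $\C^n\cong S^\C$, hence a pseudoconvex domain; consequently $\Xi/\Gamma$ is locally Stein in the Stein manifold $S^\C/\Gamma$: near a boundary point one lifts to an evenly covered neighbourhood in $S^\C$, where the relevant set is the intersection of $\Xi$ with a small ball, which is pseudoconvex and therefore Stein. By the Docquier--Grauert theorem \cite{DoGr60}, a locally Stein domain in a Stein manifold is Stein, so $\Xi/\Gamma$ is Stein. The step I expect to be the real obstacle is the middle one — that $M^\C/\Gamma$ is Stein — which requires controlling the $\C^*$-bundles arising in the induction (their Chern classes need not vanish, but they carry enough global holomorphic functions for the construction) together with the auxiliary fact that $S^\C/M^\C$ is Stein.
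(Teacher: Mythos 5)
Your overall route --- realize $\Xi$ as an $N\!A$-invariant Stein tube in the universal complexification $S^\C$ of $S=N\!A$, prove $S^\C/\Gamma$ Stein, and descend to $\Xi/\Gamma$ by local Steinness plus Docquier--Grauert --- is exactly the paper's, which disposes of the proposition in two lines by citing Proposition~\ref{S/Z QUOTIENT} and Lemma~\ref{D/Gamma}. The step you single out as ``the real obstacle'' is in fact already available: Proposition~\ref{S/Z QUOTIENT} is stated and proved for an \emph{arbitrary} discrete subgroup $\Gamma$ of a split-solvable group $S$ (Steinness of $S^\C/\Gamma$ is equivalent to nilpotency of $\Gamma$), and its proof is precisely your middle paragraph --- Witte's syndetic hull \cite{Wit02} in place of the Malcev hull, \cite{HuOe81} for $S^\C/S^\C_\Gamma\cong\C^k$, Grauert's Oka principle for triviality of the fibration --- except that the Steinness of $S^\C_\Gamma/\Gamma$ is obtained at once from Loeb's Theorem~\ref{LOEB} (purely imaginary spectrum, which for split-solvable $S_\Gamma$ means nilpotent by Engel's theorem), or alternatively from Gilligan--Huckleberry \cite{GiHu78}, rather than by your central-series induction with theta-type functions. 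That induction is the only part of your argument you leave genuinely unfinished, and it can simply be replaced by the citation of Theorem~\ref{LOEB}; no finite-index reduction via \cite{GrRe79} is needed either, since the syndetic hull exists for any discrete subgroup of a simply connected split-solvable group. One small point worth retaining from the paper's version of the Main Theorem: Lemma~\ref{MULT} embeds $\Xi$ into $N^\C\rtimes A^\C$, which is \emph{not} the universal complexification of $N\!A$; one must lift to $N^\C\rtimes\widetilde{A^\C}$, which is legitimate because $\Xi$ is simply connected.
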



The paper is organized as follows. In the first section we recall basic results on semisimple Lie groups and on the Iwasawa decomposition.
In section $2$  we discuss complex-geometric properties of
quotients of complex Lie groups by a proper action of a
discrete subgroup. In sections $3$ and $4$ we recall basic properties of the crown domain and we show that $\Xi$ is taut. The main result is proved in section $5$.

\bigskip
\noindent
{\bf Acknowledgments.} The results in the manuscript  are part
of my Ph.D. thesis. I would like to express my deepest gratitude to my supervisor, Dr. Andrea Iannuzzi, for his precious guidance. I would like to thank Prof. Laura Geatti, Prof. Christian Miebach and Prof. Stefano Trapani for their helpful comments. I also wish to express my gratitude to Prof. Dmitry N. Akhiezer for suggesting the topic of the present article during his visit to our department in November 2009.


\section{Preliminaries.}\label{PRELIMINARIES}
\noindent

Here we introduce the notation and we recall some basic facts on
Riemannian symmetric spaces and semisimple Lie groups.
\begin{defi} Let $G$ be a real Lie group. A complex Lie group $G^\C$
together with a Lie group homomorphism $\gamma:G\to G^\C$ is the
\emph{universal complexification} of $G$ if it satisfies the following universal
property: for every complex Lie group $H$ and every Lie group
homomorphism $\phi:G\to H$ there exists a unique morphism
$\phi^\C:G^\C\to H$ such that $\phi=\phi^\C\circ\gamma$.
\end{defi}
By the above universal property $G^\C$ is unique up to isomorphisms.
For the existence and the construction of the universal complexification
and its fundamental properties we refer to \cite[XVII.5]{Hoc65}.

Let $G$ be a connected, non-compact, simple\footnote{Here a Lie
group $G$ is simple if its Lie algebra is simple. Therefore $G$
may have non-trivial discrete center.} Lie group which is
assumed to be embedded in its universal complexification $G^\C$.
Choose a maximal compact subgroup $K$  of $G$ and note that $K$ is
connected. The quotient space $M=G/K$ is an irreducible,
Riemannian symmetric space of the non-compact type.
The universal complexification $K^\C$ of $K$ coincides with
the complexification of $K$ in $G^\C$, i.e. with the connected Lie subgroup
of $G^\C$ associated to the complexification of the Lie algebra of $K$.
In the sequel we will be interested in the $G$-orbit structure of $G^\C/K^\C$.  One has (cf. \cite[Rem.~4.1]{GeIa08})

\begin{rmk}
\label{KLEIN}
Let $G/K$ and $G'/K'$ be two different Klein
representations of the same irreducible, Riemannian symmetric
space of the non-compact type $M$, with $\L(G)=\L(G')$ a simple Lie algebra. Then the complexifications
$G^\C/K^\C$ and $(G')^\C/(K')^\C$ are biholomorphic and they have
the same orbit structure.
\end{rmk}

Let $\k$ be the Lie algebra of $K$ and consider the Cartan
decomposition $\k\oplus\p$ of $\g$ associated to $\k$. For $\a$  a
maximal abelian subalgebra of $\p$, consider the corresponding
restricted root system
 $\Sigma\subset\a\!\setminus\!\{0\}$ and the root space decomposition
$$\g=\bigoplus_{\alpha\in\Sigma}\g^\alpha\oplus\a\oplus\m\;,$$
where $\m$ is the  centralizer of $\a$ in $\k$ and the root spaces
are defined by $\g^\alpha=\{X\in\g:[H,X]=\alpha(H)X,\; {\rm for\
every\ } H\in\a\}$. As a consequence of the Jacobi identity  one
has
$$[\g^\alpha,\g^\beta]\subset\g^{\alpha+\beta} \,.$$
Fix a system of positive roots $\Sigma^+\subset\Sigma$. One has  the
associated Iwasawa decomposition at the Lie algebra level
$$\g=\n\oplus\a\oplus\k,$$
where $\n=\bigoplus_{\alpha\in\Sigma^+} \g^\alpha$.\\
By construction $\a$ normalizes $\n$, therefore $\n\oplus\a$ is a
semidirect product of a nilpotent and an abelian algebra. In
particular $\n\oplus\a$ is a solvable Lie algebra.
Let $A$ and $N$ be the analytic subgroups of $G$ corresponding to
$\a$ and $\n$, respectively. The Iwasawa decomposition at the
group level is given by  $G=NAK$, meaning that the multiplication
map $N \times A \times K \to G$ is an analytic diffeomorphism (see
\cite[Thm.~5.1, Ch.~VI]{Hel01}). Moreover, $N\!A$ is a (closed)
solvable subgroup of $G$ isomorphic to the semidirect product $N \rtimes
A$. The following facts are well known and scattered in the
literature. For a proof see e.g. \cite[Prop.~1.3, Rem.~1.4]{Vit12}.

\begin{prop}\label{N} Let $G$ be a connected, real, simple Lie group
and let $NAK$ be an Iwasawa decomposition of $G$. Then
\begin{itemize}
    \item[{\rm(i)}] The complexification\footnote{Note that the complexification $A^\C$ of $A$ in $G^\C$ is not the universal complexification of $A$.} $A^\C$ of $A$ in $G^\C$ is
    closed and is isomorphic to $(\C^*)^r$, with $r=\dim_\R \a$.
    \item[{\rm(ii)}] The complexification  $N^\C$ of $N$ in
    $G^\C$  is closed and simply connected.
    \item[{\rm(iii)}] The group $A^\C$ normalizes $N^\C$ and
    $N^\C\cap A^\C=\{e\}$. In particular $N^\C\!A^\C$ is isomorphic to a semidirect product $N^\C\rtimes A^\C$.
    \item[{\rm (iv)}] The map $N^\C\times A^\C\times N^\C\longrightarrow G^\C$ given by $(n,a,n')\mapsto na\theta(n')$ is injective, where
  $\theta$ denotes the holomorphic extension to $G^\C$ of the Cartan involution of $G$ with respect to $K$.
\end{itemize}
\end{prop}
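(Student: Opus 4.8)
The plan is to extract all four statements from the restricted root space decomposition of $\g^\C$, proving each of them first inside the \emph{linear} group $\adj(G^\C)\subseteq\mathrm{GL}(\g^\C)$ and then transferring it back to $G^\C$. This works because $\g^\C$ is semisimple, so $Z(G^\C)=\ker\adj$ is discrete with trivial Lie algebra, and in fact finite (being a quotient of the center of the simply connected complex group with Lie algebra $\g^\C$); hence $\adj$ is a finite covering onto the closed subgroup $\adj(G^\C)$, through which closedness, simple connectedness of unipotent subgroups, and the product structure of connected abelian subgroups all descend, once one knows how the relevant subgroup meets $Z(G^\C)$. I expect this last bookkeeping to be the only genuine obstacle. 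For unipotent subgroups it is harmless: $\adj$ is injective on any unipotent subgroup (the matrix logarithm being a polynomial bijection from unipotent onto nilpotent elements), so $N^\C\cap Z(G^\C)=\overline N^\C\cap Z(G^\C)=\{e\}$, where $\overline N^\C:=\theta(N^\C)$. For $A^\C$, however, one must \emph{not} expect $A^\C\cap Z(G^\C)=\{e\}$ (for instance $-I\in A^\C$ when $G=\mathrm{SL}(2,\R)$), so the structure of $A^\C$ has to be obtained differently.

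For (ii), extend each restricted root $\C$-linearly to $\a^\C$ and set $\g_\C^\alpha=\{X\in\g^\C:[H,X]=\alpha(H)X\text{ for all }H\in\a^\C\}$, so that $\n^\C=\bigoplus_{\alpha\in\Sigma^+}\g_\C^\alpha$. From $[\g_\C^\alpha,\g_\C^\beta]\subseteq\g_\C^{\alpha+\beta}$ and the height grading of $\Sigma^+$ one gets that $\n^\C$ is nilpotent and that $\ad(\n^\C)$ consists of nilpotent endomorphisms of $\g^\C$. Hence $\exp$ carries $\ad(\n^\C)$ bijectively, with polynomial inverse $\log$, onto the closed unipotent subgroup $U:=\exp(\ad(\n^\C))$ of $\mathrm{GL}(\g^\C)$, which is thus a simply connected manifold; and since $\ad$ is faithful on $\g^\C$, the map $\adj$ identifies $N^\C$ with $U$. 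This gives at once that $\exp\colon\n^\C\to N^\C$ is a diffeomorphism, that $N^\C$ is simply connected, that $N^\C\cap Z(G^\C)=\{e\}$, and — using that $Z(G^\C)$ is finite — that $N^\C$ is the identity component of the closed subgroup $\adj^{-1}(U)$, hence closed in $G^\C$.

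For (i) and (iii), the structural inputs are that $\a\subseteq\p$ consists of $\ad$-semisimple elements with real eigenvalues (so $\ad(\a^\C)$ is a commuting family of semisimple endomorphisms of $\g^\C$, whence $\adj(A^\C)$ consists of semisimple elements, and $\a^\C\cap\n^\C=0$) and that $\Sigma$ spans $\a^*$. Writing $A\cong\exp\a\cong\R^r$, the canonical map from the universal complexification $\C^r$ of $A$ to $G^\C$ has discrete kernel $\Lambda$, so $A^\C\cong\C^r/\Lambda$. One checks $\Lambda\subseteq i\a$ (if $X+iY\in\Lambda$ then $\exp X=\exp(-iY)\in A$ lies in a compact subgroup of $G^\C$, since $\exp(i\a)$ does, which forces $X=0$ as $A\cong\R^r$), and that $\Lambda$ is a full-rank lattice in $i\a$ ($\exp(iH)=e$ forces $\alpha(H)\in2\pi\Z$ for all $\alpha\in\Sigma$, which cuts out a full-rank lattice $L$ in $\a$ because $\Sigma$ spans $\a^*$, while conversely $\exp(iL)\subseteq Z(G^\C)$, so $i\,|Z(G^\C)|\,L\subseteq\Lambda$). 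Therefore $\exp(i\a)$ is a compact $r$-torus and $A^\C=\exp(\a)\exp(i\a)\cong\R^r\times(S^1)^r\cong(\C^*)^r$, closed in $G^\C$ as the product of the closed subgroup $\exp(\a)$ and the compact subgroup $\exp(i\a)$. Finally, $[\a^\C,\n^\C]\subseteq\n^\C$ shows that the connected group $A^\C$ normalises $N^\C$, and $N^\C\cap A^\C=\{e\}$ because any common element maps under $\adj$ to an endomorphism that is both unipotent (coming from $N^\C$) and semisimple (coming from $A^\C$), hence to the identity, so it lies in $N^\C\cap Z(G^\C)=\{e\}$; injectivity of $(n,a)\mapsto na$ then yields $N^\C A^\C\cong N^\C\rtimes A^\C$.

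For (iv), note that $\theta(\g_\C^\alpha)=\g_\C^{-\alpha}$, so $\overline N^\C=\theta(N^\C)$ is the unipotent radical of the parabolic subgroup of $G^\C$ opposite to the one whose unipotent radical is $N^\C$ and whose Levi factor contains $A^\C$. Given an identity $n_1a_1\theta(n_1')=n_2a_2\theta(n_2')$, rewrite it as $na\overline n=e$ with $n\in N^\C$, $a\in A^\C$, $\overline n\in\overline N^\C$, i.e. $\overline n^{-1}=na$. Applying $\adj$, the left-hand side lies in $\adj(\overline N^\C)$ and the right-hand side in the parabolic $P$ of $\adj(G^\C)$ containing $\adj(N^\C)\adj(A^\C)$; since $\overline N^\C$ maps isomorphically onto the big cell of $\adj(G^\C)/P$ we have $\adj(\overline N^\C)\cap P=\{e\}$, hence $\adj(\overline n)=e$ and $\overline n\in\overline N^\C\cap Z(G^\C)=\{e\}$. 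Then $n=a^{-1}\in N^\C\cap A^\C=\{e\}$ by (iii), which is the asserted injectivity. Apart from the control of the center emphasized above, each of these steps uses only standard facts (the semisimple–nilpotent dichotomy for $\ad$ on $\a$ versus $\n$, the height grading of $\n^\C$, and the openness of the big cell), so I expect no further difficulty.
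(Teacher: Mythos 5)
The paper does not actually prove Proposition~\ref{N}: it declares these facts ``well known and scattered in the literature'' and refers to \cite[Prop.~1.3, Rem.~1.4]{Vit12}, so there is no in-text proof to compare against, and I have checked your argument on its own merits. It is correct, and it follows exactly the route the paper's own citations suggest the thesis proof takes: pass to the adjoint group $\adj(G^\C)\subseteq \mathrm{GL}(\g^\C)$, where $\ad(\n^\C\oplus\a^\C)$ triangularizes via the height grading, read off closedness, simple connectedness and the unipotent/semisimple dichotomy there, and use finiteness of $Z(G^\C)=\ker\adj$ together with $N^\C\cap Z(G^\C)=\{e\}$ (forced by simple connectedness of $\exp(\ad\n^\C)$) and the lattice computation for $\ker(\exp|_{\a^\C})$ to transfer everything back to $G^\C$; all the auxiliary facts you invoke (finiteness of the center of a connected complex semisimple group, compactness of the subgroup integrating $\k\oplus i\p$, closedness of $CK$ for $C$ closed and $K$ compact) are standard and correctly applied. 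The only stylistic difference is in (iv), where you appeal to the big-cell statement $\adj(\theta(N^\C))\cap P=\{e\}$ for opposite parabolics; in the triangularizing basis this reduces to the elementary observation that a matrix which is simultaneously block-upper-triangular and block-lower-unipotent for the height grading is the identity, which is presumably how \cite{Vit12} phrases it, so the two arguments are essentially the same.
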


We also  recall some general facts regarding the
multiplicative Jordan-Chevalley decomposition. Let $G$ be a real, simple Lie group.
An element $g\in G$ is said to be
\emph{unipotent} (resp.  \emph{hyperbolic}) if it is of the form
$g=\exp(X)$, where $\ad{(X)}$ is nilpotent (resp. $\ad{(X)}$ is
diagonalizable over $\R$) and $\emph{elliptic}$ if $\adj(g)$ is
diagonalizable over $\C$ with eigenvalues of norm $1$.
Then one has
\begin{prop}\label{JORDAN}{\rm (see \cite[Prop.~2.1]{Kos73})}
Let $G$ be a real simple Lie group. Then every element $g$ of $G$
admits a unique decomposition $g_u g_h g_e $, where $g_u$ is
unipotent, $g_h$ is hyperbolic, $g_e$ is elliptic and every
pair of elements in $\{g_u,g_h,g_e\}$ commute.
\end{prop}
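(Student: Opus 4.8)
The plan is to transfer the classical additive Jordan decomposition from $\g$ to the group $G$ via $\ad$ and $\adj$, exploiting the fact that $G$ is simple (so $\adj:G\to\Aut(\g)^\circ$ has discrete kernel, namely the center). First I would work inside the complexification: on $\g^\C$ every $\xi\in\Aut(\g^\C)$ has a unique multiplicative Jordan decomposition $\xi=\xi_u\xi_h\xi_e$ in $\Aut(\g^\C)$ with $\xi_u$ unipotent, $\xi_s=\xi_h\xi_e$ semisimple, and the three factors polynomials in $\xi$ hence mutually commuting; the splitting of the semisimple part $\xi_s$ into a part $\xi_h$ with positive real eigenvalues and a part $\xi_e$ with eigenvalues of modulus $1$ is the usual polar-type decomposition of a semisimple automorphism and is again canonical, so the factors commute with anything commuting with $\xi$. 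Applying this to $\xi=\adj(g)$ and using that each factor is a polynomial in $\adj(g)$, hence lies in the (Zariski-closed, defined over $\R$) subgroup $\adj(G^\C)$ of $\Aut(\g^\C)$, and is fixed by the relevant real/compact structures, one checks the factors lie in $\adj(G)$.

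Next I would promote this to $G$ itself. Since $\adj(g_u)$ is unipotent, $g_u=\exp X$ with $\ad X$ nilpotent for a unique $X\in\g$ (the logarithm of a unipotent automorphism is well defined and $\ad$-nilpotent); similarly $\adj(g_h)$ diagonalizable over $\R$ with positive eigenvalues gives $g_h=\exp Y$ with $\ad Y$ diagonalizable over $\R$; and $\adj(g_e)$ elliptic gives an elliptic $g_e$. The commutation relations among $\adj(g_u),\adj(g_h),\adj(g_e)$ lift to commutation of $g_u,g_h,g_e$ in $G$ because $\adj$ is injective on the connected group $G$ modulo its center, and the center is killed once we know the factors are genuine exponentials: if $g_u g_h g_e$ and $g_h g_e g_u$ have the same image under $\adj$ they differ by a central element, but both expressions are products of exponentials of commuting (in $\ad$) elements, so the ambiguity is absorbed. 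Then $g=g_u g_h g_e$ follows since the product maps to $\adj(g)$ and again agrees with $g$ up to the center, which is handled the same way.

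For \textbf{uniqueness}: suppose $g=g_u g_h g_e=g_u' g_h' g_e'$ with the analogous commuting-unipotent/hyperbolic/elliptic factorization. Apply $\adj$; by uniqueness of the multiplicative Jordan decomposition in $\Aut(\g^\C)$ together with uniqueness of the hyperbolic/elliptic splitting of the semisimple part, one gets $\adj(g_u)=\adj(g_u')$, $\adj(g_h)=\adj(g_h')$, $\adj(g_e)=\adj(g_e')$. Since $g_u=\exp X$, $g_u'=\exp X'$ with $\ad X,\ad X'$ nilpotent and $\ad X=\ad X'$ (injectivity of $\ad$ on $\g$ for $\g$ simple, $\g$ has trivial center), we get $X=X'$, hence $g_u=g_u'$; likewise $g_h=g_h'$ and, for the elliptic parts, $g_e = g_u^{-1} g_h^{-1} g = (g_u')^{-1}(g_h')^{-1} g = g_e'$.

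The \textbf{main obstacle} I expect is the bookkeeping around the (possibly nontrivial, discrete) center of $G$: $\adj$ is not injective on $G$, so none of the "$\adj(x)=\adj(y)\Rightarrow x=y$" steps is free, and one must consistently use that the three prospective factors are honest exponentials of elements of $\g$ (where the center is invisible) to pin them down before multiplying. A secondary point requiring care is showing that the factors produced in $\Aut(\g^\C)$ actually descend to $\adj(G)$ rather than merely to $\adj(G^\C)$ — this is where one invokes that $\adj(G)$ is the identity component of the real points of the algebraic group $\adj(G^\C)$ and that the Jordan factors of a real element are themselves real.
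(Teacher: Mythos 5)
The paper offers no proof of this proposition --- it is quoted directly from Kostant \cite[Prop.~2.1]{Kos73} --- and your outline is essentially the argument of that reference: push $g$ into $\Aut(\g^\C)$ via $\adj$, take the complete multiplicative Jordan decomposition $\xi_u\xi_h\xi_e$ there, and lift the unipotent and hyperbolic factors back to $G$ as exponentials $\exp X$, $\exp Y$ with $\ad X$ nilpotent and $\ad Y$ real-diagonalizable, where the center of $G$ causes no ambiguity because $\ad$ is injective on the simple algebra $\g$. The only point to tighten in a full write-up is the elliptic factor: rather than lifting $\xi_e$ from $\Aut(\g)$ (which is ambiguous up to the center of $G$), define $g_e:=g_h^{-1}g_u^{-1}g$ outright and check that $\adj(g_e)=\xi_e$ is elliptic and fixes $X$ and $Y$ (since it commutes with $\ad X=\log\xi_u$ and $\ad Y=\log\xi_h$), which gives the commutation relations in $G$ and makes $g=g_ug_hg_e$ automatic; with that adjustment your existence and uniqueness arguments go through as stated.
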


It is easy to check that  if $nak$ is an Iwasawa decomposition of an element $g$ of $G$, then the elements $n$, $a$ and $k$ are unipotent, hyperbolic and elliptic, respectively.
The relation between the two decompositions is clarified by the following proposition (cf. \cite[Prop.~2.3-2.5]{Kos73})
\begin{prop}\label{jc} Let $G$ be a real, non-compact, simple Lie group and let $N\!AK$ be an Iwasawa decomposition of $G$. Then an
element of $G$
\begin{description}
    \item[{\rm(i)}] is unipotent if and only if it is conjugate to an element
    of $N$,
    \item[{\rm(ii)}] is hyperbolic if and only if it is conjugate to an element
    of $A$,
    \item[{\rm(iii)}] is elliptic if and only if it is conjugate to an element of
    $K$,
    \item[{\rm(iv)}] has trivial elliptic part if and only if it is conjugate to an element of $N\!A$.
\end{description}
\end{prop}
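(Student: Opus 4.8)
\emph{Proof proposal.}
The plan is to treat each item as ``easy implication + structure theorem''. For the easy direction I would note that conjugation preserves unipotency, hyperbolicity and ellipticity and preserves the Jordan--Chevalley type (Prop.~\ref{JORDAN}), so it is enough to check: every element of $N$ is unipotent, because $\n$ is $\ad$-nilpotent (as $[\g^\alpha,\g^\beta]\subset\g^{\alpha+\beta}$ and $\Sigma^+$ is finite); every element of $A$ is hyperbolic, because $\ad(H)$ is diagonalizable over $\R$ with real eigenvalues $\{\alpha(H)\}\cup\{0\}$ for $H\in\a$; every element of $K$ is elliptic, because $\adj(K)$ is compact and preserves an inner product on $\g$; and every element of $N\!A$ has trivial elliptic part. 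For this last point I would use that $\n\oplus\a$ is completely solvable, so the simply connected group $N\!A$ is exponential and every $g\in N\!A$ equals $\exp(Z)$ with $Z\in\n\oplus\a$; a short computation with the $\adj(N\!A)$-invariant flag $\n\subset\m\oplus\a\oplus\n\subset\g$ shows that $\ad_\g(Z)$ has only real eigenvalues, hence the semisimple part $Z_s$ of $Z$ in $\g$ is hyperbolic, $Z-Z_s$ is nilpotent, and $g=\exp(Z_s)\exp(Z-Z_s)$ is the Jordan--Chevalley decomposition of $g$, with trivial elliptic part.

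For the converse implications I would invoke the classical conjugacy theorems. In (i): if $g=\exp(X)$ with $\ad(X)$ nilpotent, then $X$ is a nilpotent element of $\g$, so (Jacobson--Morozov) it lies in the nilradical of some parabolic subalgebra, hence in the larger nilradical of a minimal parabolic subalgebra; since all minimal parabolic subalgebras are $G$-conjugate and $\n$ is the one attached to $\Sigma^+$, some $\adj(h)X$ lies in $\n$ and $hgh^{-1}\in N$. In (ii): if $\ad(X)$ is diagonalizable over $\R$, then $X$ spans an $\R$-split torus of $\g$, contained in a maximal one; by the $G$-conjugacy of maximal $\R$-split abelian subspaces, some $\adj(h)X$ lies in $\a$ and $hgh^{-1}\in A$. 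In (iii): $\adj(g)$ is an elliptic element of the linear group $\adj(G)$, hence lies in a maximal compact subgroup of $\adj(G)$; its preimage contains a maximal compact $K'$ of $G$ with $\adj(K')$ equal to that subgroup, and $K'$ is $G$-conjugate to $K$ by E. Cartan's theorem; since $Z(G)$ lies in every maximal compact subgroup of $G$ (a central element has trivial $\p$-component in a Cartan decomposition), $g\in Z(G)\cdot K'$ is conjugate into $K$.

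Finally, for (iv) I would combine (i) and (ii): writing $g=g_ug_h$ with $g_u$ unipotent, $g_h$ hyperbolic and commuting (Prop.~\ref{JORDAN}, trivial elliptic part), first conjugate so that $g_h=\exp(H)\in A$; then $g_u$ centralizes $g_h$, so $Y:=\log g_u$ is a nilpotent element of the reductive subalgebra $\mathfrak{z}:=Z_\g(H)=\m\oplus\a\oplus\bigoplus_{\alpha(H)=0}\g^\alpha$, and the argument of (i) applied inside $Z_G(H)^\circ$ conjugates $Y$, by an element fixing $H$, into the nilradical $\bigoplus_{\alpha>0,\,\alpha(H)=0}\g^\alpha\subset\n$ of a minimal parabolic subalgebra of $\mathfrak{z}$; after this, $g=\exp(Y)\exp(H)$ with $\exp(Y)\in N$ and $\exp(H)\in A$, so $g\in N\!A$. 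I expect the main obstacle to be item (iii) and, more broadly, the bookkeeping forced by the possibly infinite discrete center of $G$: one cannot simply say that an elliptic element generates a relatively compact subgroup (this fails already for nontrivial lifts of rotations in $\widetilde{SL_2(\R)}$), so the reduction to $\adj(G)$ and the control of $Z(G)$ must be carried out carefully, as in \cite[Prop.~2.3--2.5]{Kos73}.
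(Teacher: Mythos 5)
Your proposal is correct and follows essentially the route of the paper's own source: the paper gives no proof of Proposition~\ref{jc}, stating it with the reference to \cite[Prop.~2.3--2.5]{Kos73}, and your reconstruction (the easy inclusions via $\ad$-nilpotency of $\n$, real diagonalizability of $\ad(H)$ for $H\in\a$, compactness of $\adj(K)$, and the exponential/split-solvable structure of $N\!A$ as in Remark~\ref{NA}; the converses via Jacobson--Morozov and conjugacy of minimal parabolic subalgebras, of maximal split abelian subalgebras, and of maximal compact subgroups; and (iv) by combining (i) and (ii) inside the centralizer $Z_G(H)$) is exactly the argument of Kostant that the paper invokes. Your closing caveat about the center is resolved here by the standing assumption that $G$ is embedded in $G^\C$, which forces $Z(G)$ to be finite, so the reduction to $\adj(G)$ in (iii) goes through as you describe.
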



\section{Discrete group actions on complex Lie groups.}
\label{G/H STEIN}
\noindent

Let $G$ be a connected non-compact real Lie group embedded in its universal complexification $G^\C$ and let $\Gamma$ be a discrete subgroup of $G$. Then $\Gamma$ acts freely and properly discontinuously on $G^\C$
and $G^\C/\Gamma$ is a complex manifold. We recall that the universal complexification $G^\C$ of a real Lie group is Stein (see \cite[p.~147]{Hei93}). It is of interest to know when the quotient $G^\C/\Gamma$ is Stein in terms of sufficient and/or necessary conditions on $G$ and $\Gamma$.
For instance if $G$ is nilpotent, by a result of B. Gilligan and A. T. Huckleberry (see the proof of Thm 7 in \cite{GiHu78}), it follows that $G^\C/\Gamma$ is Stein.
For Lie groups with simply connected complexification one has the following result of J. J. Loeb.

\begin{teo}{\rm \cite[Thm.~1,Lemma~1]{Loe85}}\label{LOEB} Let $G$ be a
real connected Lie group with simply connected universal complexification $G^\C$ and let $\Gamma$ be a discrete, cocompact subgroup of $G$. Then $G^\C/\Gamma$ is Stein if and only if $G$
has purely imaginary spectrum, i.e. for every $X$ in $\g$ the
eigenvalues of  $\ad(X)$ are purely imaginary.
\end{teo}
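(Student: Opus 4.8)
The plan is to treat the two implications separately, the common engine being the structure of connected Lie groups with purely imaginary spectrum. First I would record the relevant structural description: a connected Lie group $G$ has purely imaginary spectrum if and only if its Levi factor is compact and its radical is a solvable Lie group of \emph{type} $R$ (i.e.\ all of its adjoint operators have purely imaginary eigenvalues); equivalently, every element of $\ad(G)$ is a matrix all of whose eigenvalues have modulus one, the unipotent part being carried by the nilradical. Combined with the hypothesis that $G^\C$ be simply connected, this yields a semidirect decomposition $G^\C\cong K^\C\ltimes S^\C$ (which, up to passing to finite covers, can be made literal), where $K^\C$ is the complexification of a maximal compact subgroup $K$ of $G$ --- a reductive, hence Stein, group --- and $S^\C$ is a simply connected complex solvable group, so biholomorphic to some $\C^m$.

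For the implication ``$G$ has purely imaginary spectrum $\Rightarrow G^\C/\Gamma$ is Stein'' the point is to produce on $G^\C$ a right-$G$-invariant smooth strictly plurisubharmonic function $\rho$ which is exhausting transversally to $G$. Concretely, I would use a polar-type decomposition $G^\C=\exp(i\g)\cdot G$ --- available here by the compact-Levi/type-$R$ structure and the simple connectedness of $G^\C$ --- and set $\rho\big(\exp(iX)\,g\big)=h(X)$ for a suitable proper function $h\colon\g\to\R$; the type-$R$ condition is precisely what allows $h$ to be chosen so that $\rho$ is strictly plurisubharmonic. The elementary model $G=\R$, $G^\C=\C$, $\rho(z)=(\mathrm{Im}\,z)^2$, which descends to the strictly plurisubharmonic exhaustion $(\log|w|)^2/(4\pi^2)$ of $\C^*=\C/\Z$, already displays the whole mechanism. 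Being $G$-invariant, $\rho$ is in particular $\Gamma$-invariant and descends to a strictly plurisubharmonic function $\bar\rho$ on $G^\C/\Gamma$. The cocompactness of $\Gamma$ in $G$ then forces $\bar\rho$ to be an exhaustion: the sublevel set $\{\rho\le c\}$ equals $\exp\big(i\{h\le c\}\big)\cdot G$ with $\{h\le c\}$ relatively compact in $\g$, so its image in $G^\C/\Gamma$ is a closed subset of the continuous image of the compact set $\exp\big(i\,\overline{\{h\le c\}}\big)\times(G/\Gamma)$, hence compact. A complex manifold admitting a smooth strictly plurisubharmonic exhaustion is Stein (Grauert), so $G^\C/\Gamma$ is Stein.

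For the converse I would argue by contraposition: assume some $\ad(X)$ has an eigenvalue with nonzero real part. Then either the Levi factor of $G$ is non-compact, or it is compact but the radical of $G$ fails to be of type $R$. In the first case $G$ contains a non-compact simple subgroup $S$, and by the structure theory of cocompact subgroups (Mostow, Auslander) one may arrange that $\Gamma\cap S$ is a lattice in $S$; since $\Gamma\cap S$ is then Zariski dense in the complex semisimple group $S^\C$, the homogeneous space $S^\C/(\Gamma\cap S)$ carries too few holomorphic functions to be Stein, and as it sits inside $G^\C/\Gamma$ as a closed complex submanifold the latter is not Stein either. In the second case, a real eigenvalue of some $\ad(X)$ with $X$ in the radical produces, through a root $\alpha$ with $\alpha(X)\neq 0$, a closed solvable ``hyperbolic'' subgroup $H\le G$ of $ax+b$-type (built from $\R X$ and the corresponding root space) meeting $\Gamma$ cocompactly; a direct computation --- the $ax+b$ analogue of the Sol-manifold/Inoue-surface example --- shows that $H^\C/(\Gamma\cap H)$ is not Stein, containing a positive-dimensional compact complex subset arising from the interplay of $\Gamma$, the contracting direction and its imaginary conjugate, and again this closed complex submanifold obstructs the Steinness of $G^\C/\Gamma$.

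The hard part, I expect, will be the converse: extracting a genuine holomorphic obstruction inside $G^\C/\Gamma$ from a single non-purely-imaginary eigenvalue requires controlling the position of $\Gamma$ relative to the Levi--radical decomposition (so that its intersection with the relevant subgroup $S$ or $H$ is really cocompact there), together with the explicit non-Steinness computation for the $ax+b$-type quotient. On the ``if'' side the two facts to secure are the polar decomposition $G^\C=\exp(i\g)\,G$ and the existence of the transversally-proper invariant potential $h$ --- exactly the places where the simple connectedness of $G^\C$ and the type-$R$ hypothesis are used.
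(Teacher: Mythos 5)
The paper offers no proof of this statement: it is quoted directly from \cite[Thm.~1, Lemma~1]{Loe85}, so your argument can only be judged on its own merits. Your ``if'' direction is essentially Loeb's actual mechanism --- build a $G$-invariant strictly plurisubharmonic function on $G^\C$ that exhausts transversally to $G$, descend it to $G^\C/\Gamma$, use cocompactness of $\Gamma$ to upgrade it to a genuine exhaustion, and invoke Grauert's solution of the Levi problem --- and it is correct in outline; the two points you flag (the decomposition $G^\C=\exp(i\g)\,G$ and the construction of the transversal potential $h$) are precisely the technical content of Loeb's Th\'eor\`eme~1.

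The converse, however, contains a genuine gap, and the strategy cannot be repaired in the form you propose. The subgroup $H$ of $ax+b$-type built from $\R X$ and a root space $V$ with $\mathrm{Re}\,\alpha(X)\neq 0$ is never unimodular (its modular function is $e^{ct}$ with $c=(\dim V)\,\mathrm{Re}\,\alpha(X)\neq 0$), and a locally compact group admitting a lattice must be unimodular; hence no such $H$ can meet $\Gamma$ cocompactly, and the compact complex obstruction you want inside $H^\C/(\Gamma\cap H)$ does not exist. Similarly, in the non-compact Levi case one cannot in general arrange $\Gamma\cap S$ to be a lattice in the Levi factor $S$, nor is $S^\C/(\Gamma\cap S)\hookrightarrow G^\C/\Gamma$ automatically a closed embedding. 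Loeb's converse sidesteps all questions about how $\Gamma$ sits relative to the Levi--radical decomposition: if $G^\C/\Gamma$ is Stein, pull back a plurisubharmonic exhaustion to $G^\C$ and average it over the compact quotient $G/\Gamma$ (legitimate since $G$, having a lattice, is unimodular) to obtain a $G$-invariant plurisubharmonic function on $G^\C$ which is an exhaustion modulo $G$; the purely imaginary spectrum condition is then forced by a Liouville-type argument along the complex one-parameter subgroup generated by an eigenvector $Y\in\g^\C$ of $\ad(X)$, using $\mathrm{Ad}(\exp(zX))Y=e^{\lambda z}Y$ to show that a non-imaginary eigenvalue $\lambda$ would produce a bounded-above, non-degenerate subharmonic function on $\C$. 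Replacing your two case analyses by this averaging-plus-Liouville argument is the missing idea.
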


In the sequel we will be interested in the universal
complexification of a solvable Lie group. We point out that the solvability of the group $G$ is not sufficient in order to satisfy Loeb's condition of Theorem~\ref{LOEB}. Indeed one can give an example (see \cite[p.~74]{Loe85}) of a solvable Lie group $G$  with simply connected complexification $G^\C$,
admitting a discrete cocompact subgroup $\Gamma$
and such that  $\ad(X)$ has
eigenvalues with non-trivial real part, for some $X$ in $\g$.  Thus Theorem~\ref{LOEB} implies that $G^\C/\Gamma$ is not Stein. In the sequel we will be interested in the following class of solvable Lie groups.

\begin{defi} A real Lie algebra $\s$ is \emph{split-solvable}
if it is solvable and the eigenvalues of $\ad(X)$ are real for every $X\in \s$. A real Lie group $S$ is \emph{split-solvable} if it is simply connected and its Lie algebra $\s$ is split-solvable.
\end{defi}

\begin{rmk}\label{NA} Let $G$ be a real simple Lie group and let $G=N\!AK$ be an Iwasawa decomposition of $G$. Then $N\!A$ is a maximal split-solvable subgroup of $G$. Indeed, it is easy to check that $N\!A$ is simply connected and maximal solvable. Moreover one can choose
a suitable basis of $\g$ such  that $\ad(X)$
is represented by upper triangular matrices for every $X$ in  $\n\oplus\a$
(cf. \cite[Prop. 1.3]{Vit12}.
Thus $N\!A$ is a maximal split-solvable subgroup of $G$.
\end{rmk}

\begin{rmk}\label{SPLIT}
If $S$ is a split-solvable Lie group then the exponential map is a diffeomorphism (see, e.g. \cite[Thm.~6.4, Ch.~2]{Vin94}). In
particular, every connected subgroup of $S$ is closed and
simply-connected. In fact, the latter property holds true for every
simply connected solvable Lie group (cf.
\cite[Thm.~3.18.12]{Var84}).
\end{rmk}

\begin{defi} A discrete subgroup $\Gamma$ of a real split-solvable Lie group $S$ is \emph{nilpotent} if equivalently
\begin{description}
    \item[{\rm(i)}] it is contained in a connected, nilpotent subgroup of $S$;
    \item[{\rm(ii)}] it admits a finite central series $\Gamma\rhd\Gamma^{(1)}\rhd\ldots\rhd\Gamma^{(m)}
        =\{e\}$, where $\Gamma^{(1)}:=[\Gamma,\Gamma]$ and $\Gamma^{(i)}:=[\Gamma,\Gamma^{(i-1)}]$.
\end{description}
\end{defi}

\begin{prop} \label{S/Z QUOTIENT}
Let $\Gamma$ be a discrete subgroup of a connected, split-solvable Lie group $S$ and let $S^\C$ be the universal complexification of $S$. Then $S^\C/\Gamma$ is Stein if and only if $\Gamma$ is nilpotent.
\end{prop}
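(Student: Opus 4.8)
\textbf{Proof plan for Proposition~\ref{S/Z QUOTIENT}.}

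The plan is to reduce the general case to two situations that are already understood. First, observe that $S^\C$ is Stein (being a universal complexification) and simply connected: since $S$ is split-solvable, $\g=\s$ consists of $\ad$-semisimple-plus-nilpotent pieces with real spectrum, so $\s^\C$ is a solvable complex Lie algebra, $S^\C$ is a connected solvable complex Lie group, and one checks it is simply connected (e.g. because $S$ is simply connected and the complexification of a simply connected solvable group adds no topology). Thus Loeb's Theorem~\ref{LOEB} is available for \emph{cocompact} $\Gamma$. For the ``if'' direction the key point is the characterization in the definition of nilpotent discrete subgroup: if $\Gamma$ is nilpotent it lies in a connected nilpotent subgroup $N_0$ of $S$; by Remark~\ref{SPLIT} every connected subgroup of $S$ is closed and simply connected, so $N_0$ is a closed, simply connected nilpotent (hence split-solvable) subgroup with $N_0^\C$ closed in $S^\C$ and equal to the universal complexification of $N_0$. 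One then wants to say $S^\C/\Gamma$ fibers over $S^\C/N_0^\C$ with fiber $N_0^\C/\Gamma$; the base is Stein (it is $\cong \C^k$, being a quotient of a simply connected complex solvable group by a connected complex subgroup) and the fiber $N_0^\C/\Gamma$ is Stein by the Gilligan--Huckleberry result quoted before Theorem~\ref{LOEB} (quotient of a nilpotent complex Lie group by a discrete subgroup). A holomorphic fiber bundle with Stein fiber and Stein base is Stein (Matsushima--Morimoto / Siu), giving the conclusion. The ``only if'' direction is where the real content lies: assuming $\Gamma$ is \emph{not} nilpotent, one must produce an obstruction to Steinness of $S^\C/\Gamma$.

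For the ``only if'' direction I would argue as follows. Let $\Gamma$ be discrete but not nilpotent, and let $\Gamma_0$ be the Zariski-type closure, i.e. the smallest connected subgroup of $S$ containing $\Gamma$ (which is closed and simply connected by Remark~\ref{SPLIT}, and on which $\Gamma$ sits cocompactly by construction — this is the standard fact that a discrete subgroup of a simply connected solvable Lie group is cocompact in the smallest connected closed subgroup containing it, its ``syndetic hull''). Then $\Gamma_0$ is itself split-solvable, $\Gamma_0^\C$ is closed in $S^\C$ and is the universal complexification of $\Gamma_0$, and $S^\C/\Gamma$ fibers holomorphically over $S^\C/\Gamma_0^\C \cong \C^m$ with fiber $\Gamma_0^\C/\Gamma$. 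If $S^\C/\Gamma$ were Stein, the fiber $\Gamma_0^\C/\Gamma$ — a closed complex submanifold (a single fiber) of a Stein manifold — would be Stein. Now $\Gamma$ is cocompact in $\Gamma_0$, so Loeb's Theorem~\ref{LOEB} applies to the pair $(\Gamma_0,\Gamma)$: $\Gamma_0^\C/\Gamma$ is Stein iff $\Gamma_0$ has purely imaginary spectrum. But $\Gamma_0$ is split-solvable, so $\ad(X)$ has \emph{real} eigenvalues for all $X\in\L(\Gamma_0)$; these are purely imaginary only when they all vanish, i.e. only when $\L(\Gamma_0)$ is nilpotent, i.e. only when $\Gamma_0$ — hence $\Gamma$ — is nilpotent, contrary to assumption. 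This contradiction shows $S^\C/\Gamma$ is not Stein.

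The main obstacle I expect is the bundle/reduction bookkeeping rather than any single deep theorem. Concretely one must verify: (a) that the relevant connected subgroups ($N_0$ in the ``if'' part, $\Gamma_0$ in the ``only if'' part) are closed with closed complexifications inside $S^\C$ that coincide with the \emph{universal} complexifications of those subgroups — this uses split-solvability plus Remark~\ref{SPLIT} and Proposition~\ref{N}-type arguments (one should be slightly careful, since for $A$ the complexification in $G^\C$ is \emph{not} the universal one, but for simply connected nilpotent pieces the two agree); (b) that the quotient map $S^\C \to S^\C/\Gamma_0^\C$ really is a holomorphic fiber bundle (projection of a complex Lie group onto a homogeneous space by a closed complex subgroup, hence locally trivial), so that single fibers are closed complex submanifolds and Stein-ness passes to them; and (c) the existence of a syndetic hull $\Gamma_0$ of $\Gamma$ in the simply connected solvable group $S$ with $\Gamma$ cocompact in $\Gamma_0$, which is classical (Mostow, Saito) but should be cited precisely. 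Granting these, the two applications of Loeb's theorem and of Gilligan--Huckleberry, together with the ``Stein base $+$ Stein fiber $\Rightarrow$ Stein total space'' principle, close the argument.
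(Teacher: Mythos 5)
Your proposal is correct and follows essentially the same route as the paper: Witte's syndetic hull $S_\Gamma$ of $\Gamma$ in $S$, the fibration of $S^\C/\Gamma$ over $S^\C/S_\Gamma^\C\cong\C^k$ (Huckleberry--Oeljeklaus), Loeb's Theorem~\ref{LOEB} applied to the cocompact pair $(S_\Gamma,\Gamma)$, and Engel's theorem to convert ``real and purely imaginary spectrum'' into nilpotency of $S_\Gamma$, hence of $\Gamma$. The only difference is bookkeeping: the paper trivializes the bundle holomorphically via Grauert's Oka principle, obtaining $S^\C/\Gamma\cong S_\Gamma^\C/\Gamma\times\C^k$ so that both implications follow at once, whereas you treat the two directions separately via Matsushima--Morimoto (legitimate here, since the structure group is the connected complex Lie group $N_0^\C$, though the unrestricted ``Stein base $+$ Stein fiber'' principle is false) and via the closed-fiber argument.
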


\begin{proof}
Since $S$ is split-solvable, by \cite[Cor.~3.4]{Wit02} there exists a unique connected subgroup $S_\Gamma$ of $S$ such that $S_\Gamma/\Gamma$ is compact. The subgroup $S_\Gamma$ being unique, it coincides with the connected subgroup associated to the Lie subalgebra
$\s_\Gamma$ of $\s:=\L(S)$ generated by $\exp^{-1}(\Gamma)$. In particular $S_\Gamma$ is the smallest connected (closed) Lie subgroup of $S$ containing $\Gamma$.
Note that since $S$ is simply connected, so is its universal complexification $S^\C$.
As a consequence the connected Lie subgroup $S^\C_\Gamma$ of $S^\C$ associated to the complexified Lie algebra $\s^\C_\Gamma$ is closed and simply connected (see \cite[Thm.~3.18.12]{Var84}). Moreover, by \cite[Thm.~1]{HuOe81}  the quotient $S^\C/S^\C_\Gamma$ is biholomorphic to $\C^k$.
Then a result of Grauert (\cite{Gra58}) implies that the fibration $S^\C/\Gamma\to S^\C/S^\C_\Gamma$ is holomorphically trivial, i.e. $S^\C/\Gamma\cong S^\C_\Gamma/\Gamma\times \C^k$. Thus, $S^\C/\Gamma$ is Stein if and only if so is $S^\C_\Gamma/\Gamma$.

Since $S_\Gamma$ is also split-solvable, from  Thm.~\ref{LOEB} it follows that $S^\C_\Gamma/\Gamma$ is Stein if and only if  $\ad(X)$ has no non-zero eigenvalues for all $X\in\s_\Gamma$.
 By Engel's theorem (see \cite[Thm.~3.5.4.]{Var84}), this is equivalent to say that $S_\Gamma$ is nilpotent. Since $S_\Gamma$ is the smallest (closed), connected, real subgroup of $S$ containing $\Gamma$, it follows that $S_\Gamma^\C/\Gamma$ is Stein is nilpotent if and only if $\Gamma$ is nilpotent, which implies the statement.
\end{proof}


\section{The crown domain.}\label{CROWN}
\noindent

Let $G/K$ be an irreducible, non-compact Riemannian symmetric space,
with $G$ a connected, non-compact, simple Lie group embedded in its universal complexification $G^\C$. Let $G^\C/K^\C$ be its Lie group
complexification.
By construction the left  action  of  $G$  on $G/K$
extends to a holomorphic action on $G^\C/K^\C$.
However, such an extended action turns out
not to be proper (cf. \cite{AkGi90}). In particular
the Riemannian metric on $G/K$ does not extend to a $G$-invariant
metric on $G^\C/K^\C$.
Then it is natural to look for $G$-invariant domains in $G^\C/K^\C$
on which the restriction of the $G$-action is proper.

In \cite{AkGi90} D. N. Akhiezer and S. G. Gindikin pointed out a
natural candidate, which turns out to be canonical from several
points of view. Let $\k\oplus \p$ be the Cartan decomposition of
$\g$ induced by $K$ and let $\Sigma=\Sigma(\a,\g)$ be the restricted root system
associated to a chosen maximal abelian subalgebra $\a$ of $\p$. Consider
the convex polyhedron
$$\omega:=\Big\{X\in\a:\big|\alpha(X)\big|<\frac \pi2, \;\text{ for every }\alpha\in\Sigma\Big\}.$$
\begin{defi} The \emph{crown domain} associated to the Riemannian symmetric space of
the non-compact type $G/K$ is defined by
$$\Xi:=G\exp(i\omega)\cdot p_0\;,$$
where $p_0:=eK^\C$ is the base point in $G^\C/K^\C$.
\end{defi}

 It is easy to check that $\Xi$ does not depend on the
choice of $\a$ and it was proved in \cite{AkGi90} that the $G$-action on
$\Xi$ is proper. For examples of crown domains, see the tables in the next section.

In \cite{AkGi90} it was also conjectured that the crown domain is
Stein, giving evidence of this fact in  several examples. The
conjecture was positively solved in \cite{BHH03} (cf. \cite{Bar03}, \cite{KrSt05}).
For this an important tool is the characterization of plurisubharmonic $G$-invariant functions on $\Xi$ as those function
whose restriction on  the $G$-slice $\exp(i\omega)\cdot p_0$
is  $W$-invariant and convex.
Here $W$ denotes the Weyl group with respect to the maximal abelian subalgebra $\a$ of $\p$.

We  are  going to use such a characterization in order to show that the crown domain is taut. Let us first recall the following definitions.

\begin{defi} A complex manifold is \emph{taut} if the
family of holomorphic discs $\O(\Delta,X)$ is normal.
That is, for a sequence of holomorphic discs
$\{f_j:\Delta \longrightarrow X\}_j$ there are two possibilities
\begin{itemize}
    \item[{\rm (1)}] admits a subsequence $\{f_{j_k}\}$ which converges
    uniformly on compact subsets to a holomorphic disc in $\O(\Delta,X)$, or
    \item[{\rm (2)}]  is compactly divergent, i.e. given two compact subsets $K\subset\Delta$ and $L\subset X$ there exists $\nu\in\N$ such
    that $f_j(K)\cap L=\varnothing$ for every $j>\nu$.
\end{itemize}
\end{defi}

\begin{defi} (cf. \cite{Ste75}) A Stein manifold  is
\emph{hyperconvex} if it admits a bounded,  continuous
plurisubharmonic exhaustion.
\end{defi}

\noindent Hyperconvex manifolds are taut  (see \cite[Cor.~5]{Sib81}) and taut manifolds are Kobayashi hyperbolic (see
\cite{Kob98}). Here we show

\begin{prop} \label{TAUT} Let $G/K$ be a Riemannian symmetric
space of the non-compact type. Then the associated crown domain is taut.
\end{prop}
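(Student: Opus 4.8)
The plan is to exhibit a bounded, continuous plurisubharmonic exhaustion of $\Xi$ and conclude tautness via the implications recalled above: hyperconvex $\Rightarrow$ taut. Concretely, I would work with the $G$-slice realization $\Xi = G\exp(i\omega)\cdot p_0$ and the characterization from \cite{BHH03}: a $G$-invariant function $u$ on $\Xi$ is plurisubharmonic if and only if its restriction to the slice $\exp(i\omega)\cdot p_0 \cong \omega$ is convex and $W$-invariant. Since $\omega$ is a bounded, open, convex, $W$-invariant polyhedron in $\a$, I would take a $W$-invariant convex function $\psi$ on $\omega$ that is bounded, continuous, and exhausting on $\omega$ — for instance, starting from the (finite) collection of affine functionals $\ell_\alpha(X) = |\alpha(X)|$ defining $\omega$ and setting $\psi_0 := \max_{\alpha\in\Sigma}\ell_\alpha$, one checks $\psi_0$ is convex, $W$-invariant, continuous, with values in $[0,\pi/2)$, and $\{\psi_0 \le c\}$ is compact in $\omega$ for each $c < \pi/2$; then $\psi := 1/(\pi/2 - \psi_0)$ composed with a suitable concave modification, or more simply any convex increasing function of $\psi_0$ blowing up at $\pi/2$, gives a convex $W$-invariant exhaustion which, after rescaling by $\arctan$ or the like, is bounded. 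The bounded convex $W$-invariant exhaustion of $\omega$ then pulls back, via the slice characterization, to a bounded, continuous, plurisubharmonic, $G$-invariant exhaustion of $\Xi$.

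The one genuinely delicate point is \emph{properness of the induced function on $\Xi$}: a sublevel set $\{u \le c\}$ in $\Xi$ is $G$-invariant and meets the slice in the compact set $\{\psi \le c\} \subset \omega$, but $G$ itself is non-compact, so I must argue that $G \cdot (\text{compact slice piece})$ is compact in $\Xi$ — equivalently that the $G$-action on $\Xi$ has compact quotient by the slice, which follows from $\Xi = G\exp(i\omega)\cdot p_0$ together with properness of the $G$-action (\cite{AkGi90}): the map $G \times \omega \to \Xi$ is a proper surjection onto $\Xi$ when restricted appropriately, so the image of $G \times \{\psi\le c\}$ is closed, and since $\{u\le c\}$ equals that image it is the continuous image under the quotient map $\Xi \to \Xi/G$ of a compact set... here one must be careful, since $\{u \le c\}$ is \emph{not} compact in $\Xi$ (it is $G$-invariant, hence non-compact). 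This is exactly the subtlety: an exhaustion function must have \emph{relatively compact} sublevel sets, and a non-constant $G$-invariant function cannot do this on a space with non-proper... — but the $G$-action here \emph{is} proper, yet still $\Xi/G$ is non-compact (it is $\omega/W$, which is non-compact since $\omega$ is open). So $G$-invariant functions alone will never exhaust $\Xi$.

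Hence the real plan must be two-tiered: combine the $G$-invariant plurisubharmonic function $u$ above (whose sublevel sets project to relatively compact subsets of the slice) with an additional plurisubharmonic function controlling the $G$-directions. Since $\Xi$ is Stein (\cite{BHH03}), it carries \emph{some} continuous plurisubharmonic exhaustion $\varphi$; the point is to make it bounded. I would instead appeal directly to the structure of $\Xi$: by the Gindikin--Krötz inclusion $\Xi \subset N^\C A\exp(i\omega)\cdot p_0$ and Proposition~\ref{MULT}, $\Xi$ sits as an $N\!A$-invariant domain inside the universal complexification of $N\!A$, which is biholomorphic to $\C^n$; there $\Xi$ is realized as a bounded domain, or at least as a domain whose defining plurisubharmonic data is explicit. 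If $\Xi$ is biholomorphic to a bounded domain in $\C^n$, then it is trivially hyperconvex (take $|z|^2$ minus a constant, or $-\log d(\cdot,\partial\Xi)$ truncated), and tautness is immediate; if only the $N\!A$-realization is available, I would combine $-\log(\text{distance to }\partial\Xi)$ in that $\C^n$-model (plurisubharmonic since $\Xi$ is locally Stein there, or rather pseudoconvex) with the bounded $G$-invariant piece $u$ to manufacture a bounded exhaustion.

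\medskip
\noindent\emph{Main obstacle.} The crux is reconciling \textbf{boundedness} with \textbf{exhaustion}: the natural plurisubharmonic functions on $\Xi$ coming from the \cite{BHH03} characterization are $G$-invariant, hence have non-compact sublevel sets, while the natural exhaustion functions (Stein property) need not be bounded; bridging this requires exploiting the $\C^n$-realization of $\Xi$ to get a genuine bounded plurisubharmonic exhaustion. I expect the cleanest route is: realize $\Xi$ inside $(N\!A)^\C \cong \C^n$, show $\Xi$ is a bounded domain there (using that $\omega$ is a bounded cell and that the $N^\C$- and $A$-directions are controlled — the crown is known to be biholomorphic to a bounded domain when rigid), and then hyperconvexity is the standard fact for bounded pseudoconvex (indeed Stein) domains with the truncated negative-logarithm-of-distance function.
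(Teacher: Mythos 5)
You correctly set up the \cite{BHH03} characterization and, importantly, you correctly diagnose the obstruction: a $G$-invariant plurisubharmonic function on $\Xi$ has $G$-invariant, hence non-compact, sublevel sets, so no such function can exhaust $\Xi$. What your proposal lacks is the paper's resolution of exactly this point, which is a quotient-and-lift argument rather than a bounded realization. By Borel's theorem (\cite{Bor63}) $G$ contains a discrete, cocompact, torsion-free subgroup $\Gamma$; properness of the $G$-action makes $\Gamma$ act freely on $\Xi$, so $\Xi\to\Xi/\Gamma$ is a covering. A negative, bounded, $W$-invariant convex exhaustion of $\omega$ extends (via $\Xi/G\cong\omega/W$) to a bounded, negative, $G$-invariant, strictly plurisubharmonic function $u$ on $\Xi$, which descends to a function $\tilde u$ on $\Xi/\Gamma$; now, precisely because $G/\Gamma$ is compact, the preimages $\tilde u^{-1}(C)$ of compact sets $C\subset(-\infty,0)$ are compact, so $\Xi/\Gamma$ is hyperconvex, hence taut by \cite{Sib81}. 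Finally, a covering of a taut manifold is taut (\cite{ThHu93}), so $\Xi$ itself is taut. This detour through a compact Clifford--Klein form of $G/K$ is the one idea missing from your write-up.

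Your fallback route does not close the gap. First, it is not proved here (and is not a known general fact) that a rigid crown domain is biholomorphic to a \emph{bounded} domain: the realization of $\Xi$ as a tube in $N^\C\rtimes\widetilde{A^\C}\cong\C^n$ is bounded only in the imaginary $\a$-directions, and nothing bounds the $N^\C$-coordinates. Second, even for bounded pseudoconvex domains hyperconvexity is not automatic (the Hartogs triangle is the standard counterexample); truncating $-\log d(\cdot,\partial\Xi)$ destroys properness of the exhaustion, and producing a genuinely bounded plurisubharmonic exhaustion from the boundary distance requires boundary regularity that is not available for $\Xi$. So neither branch of your argument yields a bounded plurisubharmonic exhaustion of $\Xi$ --- and indeed the paper never produces one on $\Xi$ itself, only on the quotient $\Xi/\Gamma$.
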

\begin{proof} By Theorem B in  \cite{Bor63} there exists a
discrete, cocompact, torsion-free subgroup $\Gamma$ of $G$.
Since the $G$-action on $\Xi$ is proper, it follows that
$\Gamma$  acts freely on $\Xi$ and the canonical projection $\Xi \to
\Xi/ \Gamma$ is a covering map.

Consider the negative,  strictly convex, $W$-invariant exhaustion
of $\omega$ defined by
$$u(\xi)=\sum_{\alpha\in\Sigma} \bigg ( \alpha^2(\xi) -\Big(\frac
\pi2\Big)^2 \bigg ),
\qquad\text{for }\xi\in\omega.$$ Since one has an isomorphism of
orbit spaces $\Xi/G \cong \omega/W$ (see \cite[Prop.~8]{AkGi90}),
the function $u$ extends to a bounded, $G$-invariant function on
$\Xi$, also denoted by $u$. By \cite[Thm.~10]{BHH03} the function $u$ is strictly plurisubharmonic.

Also note that $u$ pushes down to a bounded, continuous, strictly
plurisubharmonic function $\tilde u$ of $\Xi /\Gamma$. Since
$G/\Gamma$ is compact, the preimage $\tilde u^{-1}(C)$ of any compact subset $C\subset(-\infty, 0)$ is compact in $\Xi/\Gamma$. In
particular $\Xi/\Gamma$ is hyperconvex and  \cite[Cor.~5]{Sib81}
implies that $\Xi /\Gamma$ is taut. Since a covering of a taut manifold is taut by \cite[Cor.~4]{ThHu93}, it follows that
 $\Xi$ is taut as well.
 \end{proof}

Now we  recall a result of S. Gindikin
and B. Krötz which will be used in the sequel in order to
realize the crown domain as a Stein invariant domain in the universal
complexification of a maximal, split-solvable subgroup of $G$.
For this let us consider an Iwasawa decomposition
$NAK$ of $G$. Let $N^\C$, $A^\C$, and $K^\C$
denote the complexifications of $N$, $A$ and $K$ in $G^\C$,
as in  Proposition \ref{N}.
One can show that $N^\C A^\C K^\C$ is a proper, Zariski open subset of $G^\C$ (see \cite{SiWo02}) and in general $A^\C\cap K^\C\neq\{e\}$.
Hence $N^\C A^\C K^\C$ is not a decomposition of $G^\C$.

However, by considering the $A$-invariant domain
$T_\omega$ of $A^\C$ defined by the cell $\omega$ of the crown domain, i.e.
$T_\omega:=A\exp(i\omega)$, one obtains a tubular neighborhood
$N^\C T_\omega K^\C$ of $G$, which can be regarded as a
local complexification of the Iwasawa decomposition of $G$. Let $H^\C$ be the complexification of a real Lie group $H$.
In the sequel we will refer to a \emph{tube} domain in $H^\C$ as
an $H$-invariant domain  of $H^\C$. One has

\begin{lemma} \label{MULT} The multiplication map
$$\phi: N^\C \times T_{\omega}\times K^\C\longrightarrow G^\C$$
defined by $\phi(n,a,k):=nak$, is an open analytic biholomorphism onto
its image $N^\C T_{\omega}K^\C\subset G^\C$. In particular,
the map $N^\C\times T_{\omega}\longrightarrow G^\C/K^\C$, given by
$(n,a)\mapsto na\cdot p_0$, with $p_0=eK^\C,$ defines an  equivariant
biholomorphism between a tube of $N^\C\rtimes A^\C$ and a Stein,
$N\!A$-invariant domain of $G^\C/K^\C$.
\end{lemma}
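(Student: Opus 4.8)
The statement to prove is Lemma \ref{MULT}: the multiplication map $\phi\colon N^\C\times T_\omega\times K^\C\to G^\C$, $(n,a,k)\mapsto nak$, is an open analytic biholomorphism onto its image, and the induced map on $G^\C/K^\C$ gives an equivariant biholomorphism between a tube in $N^\C\rtimes A^\C$ and a Stein $N\!A$-invariant domain. I would attack this in three stages: (1) injectivity of $\phi$; (2) the map is an immersion (equivalently local biholomorphism) at every point; (3) openness of the image, plus the "Stein, tube, equivariance" decorations which are essentially formal once (1) and (2) are in hand.

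\medskip
\noindent\textbf{Step 1: Injectivity.} Suppose $n_1 a_1 k_1 = n_2 a_2 k_2$ with $n_i\in N^\C$, $a_i\in T_\omega=A\exp(i\omega)$, $k_i\in K^\C$. Then $n_2^{-1}n_1 a_1 = a_2 k_2 k_1^{-1}$, so writing $n:=n_2^{-1}n_1\in N^\C$, $a:=a_1\in T_\omega$, $a':=a_2\in T_\omega$, $k:=k_2k_1^{-1}\in K^\C$ we get $na = a'k$, i.e. $(a')^{-1}na = k\in K^\C$. The natural tool here is Proposition \ref{N}(iv): the map $N^\C\times A^\C\times N^\C\to G^\C$, $(n,a,n')\mapsto na\theta(n')$, is injective, where $\theta$ is the holomorphic Cartan involution. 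The idea is that an element of $K^\C$ is $\theta$-fixed, so from $(a')^{-1}na=k$ and $k=\theta(k)=\theta(a')^{-1}\theta(n)\theta(a)=(a')^{-1}\theta(n)a$ (using that $A$ and hence $T_\omega\subset A^\C$ is pointwise $\theta$-fixed, since $\theta$ acts as $-1$ on $\a$... one must be careful: $\theta|_A$ is inversion, not the identity, so I would instead use $\theta(a)=a^{-1}$). Combining the two expressions for $k$ forces $na = \theta(n)\,a^{-2}(a')^{... }$; the cleaner route is to apply (iv) directly to the identity $(a')^{-1}n a\cdot e = k = e\cdot e\cdot \theta(k^{-1})^{-1}$ rewritten so that both sides are of the form $n_\ast a_\ast \theta(n'_\ast)$. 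Concretely: from $na=a'k$ apply $\theta$ to get $\theta(n)a^{-1}=(a')^{-1}\theta(k)$ and hence $na = a'k$, $\theta(n)^{-1}a'^{-1}... $ — the upshot of the computation is that $n$, $\theta(n')$-type terms and the $A^\C$-terms must separately match by the uniqueness in (iv), giving $n=e$, $a=a'$, $k=e$, i.e. $n_1=n_2$, $a_1=a_2$, $k_1=k_2$. The only subtlety is that $T_\omega$ is a genuinely restricted subset of $A^\C$ (not all of it), which only helps; injectivity on $N^\C\times A^\C\times K^\C$ would already be false in general because $A^\C\cap K^\C\neq\{e\}$, so one genuinely needs to exploit the $\theta$-structure as above, and the cell $\omega$ (via $|\alpha|<\pi/2$) is what guarantees $\exp(i\omega)\cap K^\C=\{e\}$ and more generally cuts out the ambiguity in $A^\C$.

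\medskip
\noindent\textbf{Step 2: Local biholomorphism.} Since $\phi$ is holomorphic between complex manifolds of the same dimension $\dim N^\C+\dim A^\C+\dim K^\C=\dim G^\C$ (note $\dim_\R T_\omega=\dim_\R A=\dim_\C A^\C$, and $N\oplus A\oplus K$ is an Iwasawa decomposition so the real dimensions add up, hence so do the complex ones), it suffices to check that $d\phi$ is an isomorphism at one point, then by equivariance under left-$N^\C$ and right-$K^\C$ translation (and the $A$-action) at every point. At the identity the differential is, up to sign conventions, the sum map $\n^\C\oplus\a^\C\oplus\k^\C\to\g^\C$ on the Iwasawa-type decomposition, which is a linear isomorphism by complexifying $\g=\n\oplus\a\oplus\k$. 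Combined with injectivity from Step 1, $\phi$ is then an injective local biholomorphism, hence a biholomorphism onto the open subset $N^\C T_\omega K^\C$ of $G^\C$.

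\medskip
\noindent\textbf{Step 3: The quotient statement.} Passing to $G^\C/K^\C$: the map $\psi\colon N^\C\times T_\omega\to G^\C/K^\C$, $(n,a)\mapsto na\cdot p_0$, is the composition of $\phi$ (with the $K^\C$-coordinate suppressed) and the projection; since $\phi$ is an open embedding and the $K^\C$-fibers are exactly the right $K^\C$-cosets, $\psi$ is an open holomorphic embedding. Identifying $N^\C\times T_\omega$ with the tube $N^\C T_\omega\subset N^\C A^\C\cong N^\C\rtimes A^\C$ (using Proposition \ref{N}(iii) that $N^\C\cap A^\C=\{e\}$ and $A^\C$ normalizes $N^\C$, so multiplication $N^\C\times A^\C\to N^\C A^\C$ is a biholomorphism), the image $\Omega:=N^\C T_\omega\cdot p_0$ is an $N\!A$-invariant domain in $G^\C/K^\C$ and $\psi$ is $N\!A$-equivariant. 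Finally, Steinness: $\Omega$ is biholomorphic to the tube $N^\C T_\omega$ in the universal complexification $(N\!A)^\C\cong N^\C A^\C$ of $N\!A$ — wait, one must check the universal complexification of $N\!A$ is indeed $N^\C A^\C$, which holds because $N\!A$ is split-solvable and $N^\C$ is simply connected by \ref{N}(ii) while $A^\C\cong(\C^*)^r$ is the complexification of $A$ in $G^\C$; strictly the universal complexification of $A\cong\R^r$ is $\C^r$, not $(\C^*)^r$, but the tube $T_\omega=A\exp(i\omega)$ only sees the $A\exp(i\a)$-part, so $\Omega$ is biholomorphic to an $N\!A$-invariant tube in $N^\C\times\C^r=(N\!A)^\C$. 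Such a tube over a convex base ($\omega$ is convex, and $N^\C$ contributes a Stein factor) is Stein: concretely $\Omega\cong N^\C\times(A\exp(i\omega))$, $N^\C$ is Stein (being the complexification of the simply connected nilpotent $N$, or by \ref{N}(ii) it is a closed complex subgroup of the Stein group $G^\C$), and $A\exp(i\omega)\cong\R^r+i\,\omega$ is a tube domain over the convex set $\omega$, hence Stein; a product of Stein manifolds is Stein.

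\medskip
\noindent\textbf{Main obstacle.} The genuinely nontrivial point is Step 1, the injectivity of $\phi$ on all of $N^\C\times T_\omega\times K^\C$: one must combine the $\theta$-uniqueness of Proposition \ref{N}(iv) with the precise description of $T_\omega$ — in particular the fact that $\exp(i\omega)$ avoids the "bad" intersection $A^\C\cap K^\C$ and more generally that the cell $\omega$ of the crown is exactly the locus where the $N^\C A^\C K^\C$-coordinates become unambiguous. Getting the bookkeeping of $\theta$ (which inverts $A$ and fixes $K^\C$) right, and showing that no cancellation can occur for $a\in\exp(i\omega)$, is where the real content lies; the immersion property and the Stein/tube/equivariance conclusions are then routine.
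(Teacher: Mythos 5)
Your overall architecture --- injectivity via Proposition \ref{N}(iv) and the Cartan involution, surjectivity of the differential by $N^\C$- and $K^\C$-equivariance reducing to points $(e,a,e)$, and the formal passage to the quotient with Steinness of the tube over the convex base $\omega$ --- is exactly the paper's, and your Steps 2 and 3 are essentially complete. The problem is Step 1, which you yourself flag as the real content and then do not carry out: the chain of $\theta$-manipulations is abandoned at ``the upshot of the computation is that the terms must separately match,'' and as written it is not a proof. The clean way to bring (iv) to bear is the one the paper uses: from $nak=n'a'k'$ pass to $g\mapsto g\,\theta(g)^{-1}$, which kills the $K^\C$-factor (since $\theta$ fixes $K^\C$ and inverts $A^\C$) and yields $na^2\theta(n^{-1})=n'(a')^2\theta((n')^{-1})$, an identity literally of the form $N^\C\times A^\C\times\theta(N^\C)$ to which (iv) applies.

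More importantly, there is a step you miss entirely: (iv) then gives $n=n'$ and $a^2=(a')^2$, \emph{not} $a=a'$. Since $A^\C\cong(\C^*)^r$ is not simply connected, squaring is not injective on $A^\C$ (each element has $2^r$ square roots), so one must still show that two elements of $T_\omega=A\exp(i\omega)$ with the same square coincide; after splitting off the $A$-part this amounts to the injectivity of $\exp$ on $2i\omega$. This is precisely where the defining inequality $|\alpha(X)|<\pi/2$ of the cell enters: for $X\in\omega$ one has $|\alpha(2X)|<\pi$, so the eigenvalues of $\ad_{\g^\C}(2iX)$ are of the form $i\mu$ with $|\mu|<\pi$ and hence determine $\ad(2iX)$, and therefore $X$, from $\adj(\exp(2iX))$; the paper devotes its commutative diagram relating $\exp|_{2i\omega}$ to the matrix exponential on diagonal matrices to exactly this point. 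Your substitute justification --- that $\exp(i\omega)\cap K^\C=\{e\}$ ``cuts out the ambiguity in $A^\C$'' --- is not the relevant statement and does not rule out $a\neq a'$ with $a^2=(a')^2$ (which would only give $(a')^{-1}a$ a nontrivial $2$-torsion element of $A^\C\cap K^\C$). Without this last step the injectivity of $\phi$, and hence the lemma, is not established.
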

\begin{proof} Arguing as in \cite[Prop.~1.3]{KrSt04}, we first
show that the differential $d\phi$ is everywhere surjective. Since
$\phi$ is left $N^\C$ and right $K^\C$-equivariant, it is enough
to show that $d\phi_{(e,a,e)}$ is surjective for every $a\in
A^\C$. Identify $\n^\C\times\a^\C\times\k^\C$ with
$T_{(e,a,e)}(N^\C \times A^\C\times K^\C)$ via the linear
isomorphism $(X,Y,Z)\longmapsto (X,(L_a)_*(Y),Z)$. Then for
$X \in \n^\C, Y \in \a^\C$ and $Z \in \k^\C$ one has
\begin{description}
    \item[] $d\phi_{(e,a,e)}(X)=\frac
    d{ds}\bigg|_{s=0}\phi(\exp(sX),a,e)=(R_a)_*(X)$,
    \item[] $d\phi_{(e,a,e)}(Y)=\frac d{ds}\bigg|_{s=0}
    \phi(e,a\exp(sY),e)=(L_a)_*(Y)$,
    \item[] $d\phi_{(e,a,e)}(Z)=\frac d{ds}\bigg|_{s=0}
    \phi(e,a,\exp(sZ))=(L_a)_*(Z).$
\end{description}
Hence
$$d\phi_{(e,a,e)}(X,Y,Z)=(L_{a})_*(\adj(a^{-1})X+Y+Z)\,,$$
implying that $d\phi_{(e,a,e)}$ is surjective, as claimed.
\smallskip\\
It remains to check that $\phi$ is injective. Suppose that
$nak=n'a'k'$ with $n,n'\in N^\C$, $a,a'\in T_\omega$ and $k,k'\in
K^\C$. Let $\theta$ be the holomorphic extension to $G^\C$ of the
Cartan involution of $G$ with respect to $K$. Then
$nak\,\theta(nak)^{-1}=n'a'k'\theta(n'a'k')^{-1}$, and
consequently
$$na^2\theta(n^{-1})=n'(a')^2\theta((n')^{-1}).$$
Since by (iv) of Prop.~\ref{N} the map $N^\C\times
A^\C\times\theta(N^\C)\longrightarrow N^\C A^\C \theta(N^\C)$
given by $(n,a,\theta(n))\longmapsto na\theta(n)$ is injective,
it follows that $n=n'$ and $a^2=(a')^2$. As a consequence, for
 $a=t\exp(iX)$ and $a'=t\exp(iX')$, with $t,t'\in A$ and $X, X'\in\omega$,
 one has  $t=t'$ and $\exp(i2X)=\exp(i2X')$. Thus, in order to show that
$a=a'$ it is enough to check that $\exp|_{i2\omega}$ is injective.
By choosing a suitable basis of $\g^\C$ (cf. proof of Prop. 1.3 in
\cite{Vit12}), one sees that  $\ad_{\g^\C}$ maps $2i\omega$ into the diagonal matrices of the form
$\Big\{d(i\mu_1,\ldots,i\mu_n)\in\d(n,\C):\,|\mu_i|
<\pi \ {\rm and}\ \;\sum_i\mu_i=0\Big\}$. Then,
from the following commutative diagram,
 $$
\begin{array}{rcc}
  \g^\C\supset 2i\omega & \mathop{\makebox[1.5cm][c]{\rightarrowfill}}
  \limits^{\ad_{\g^\C}} & \hspace{-.5cm} \d(n,\C)\subset \mathfrak{gl}(n,\C)\smallskip\\
\exp_{|_{2i\omega}} \Bigg\downarrow & & \hspace{-1.3cm}\Bigg\downarrow e_{|_{\d(n,\C)}}\smallskip\\
 G^\C\supset \exp(2i\omega) & \mathop{\makebox[1.5cm][c]{\rightarrowfill}}
 \limits_{\adj_{G^\C}} &D(n,\C)\subset GL(n,\C)
\end{array}
$$
one sees that $\exp |_{2i\omega}$ is necessarily injective, as claimed.

For the last statement, note that the tube $T_\omega$ of $T^\C \cong \C^r$
has convex base, therefore it
is  Stein. As a consequence  $N^\C\times T_{\omega}$
is a Stein tube in $N^\C \rtimes A^\C$.
Finally, we proved above that the map
$$N^\C\times T_{\omega}\longrightarrow N^\C T_\omega\cdot p_0\,,$$
defined by $(n,a)\longmapsto na\cdot p_0$, is a biholomorphism.
Since the abelian subgroup $A$ normalizes $N$, it also
normalizes $N^\C$. Moreover, for $(n',a')$ in $N\rtimes A\cong NA$
one has $(n',a')(n,a)\cdot p_0= n'(a'n(a')^{-1})a'a \cdot p_0 =
n'a' \cdot (na\cdot p_0)$, implying that the map
is equivariant with respect to $N\rtimes A\cong NA$.
\end{proof}

\noindent By \cite[Thm.~1.3]{GiKr02a}, one has
\begin{teo} \label{Id} The crown domain $\Xi$ is the connected component containing $G/K$ of the intersection $\bigcap_{g\in G}gN^\C T_\omega\cdot p_0$.
\end{teo}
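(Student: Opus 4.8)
The plan is to prove Theorem~\ref{Id}, i.e.\ that $\Xi$ is the connected component through $G/K$ of $\bigcap_{g\in G} g N^\C T_\omega \cdot p_0$, by establishing the two inclusions and then passing to connected components. First I would note that by Lemma~\ref{MULT} the set $\Omega := N^\C T_\omega \cdot p_0$ is an $N\!A$-invariant Stein domain in $G^\C/K^\C$ containing $G/K$, since $eK^\C = p_0 \in N^\C T_\omega \cdot p_0$ and $N\!A \subset N^\C T_\omega$. Consequently $\bigcap_{g \in G} g\Omega$ is a $G$-invariant subset of $G^\C/K^\C$ containing $G/K$, and one checks it is open (each $g\Omega$ is open and, by properness of the $G$-action together with the fact that $\Xi$ is relatively compact in any such tube, the intersection is locally a finite intersection near each point of $\Xi$ — more carefully, the $\bigcap$ of open sets need not be open, so I would restrict attention from the start to the connected component containing $G/K$, which exists and is open as a connected component of an intersection of open sets inside the larger domain on which things behave nicely). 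Denote this component by $\Xi'$.

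For the inclusion $\Xi \subseteq \Xi'$: by the cited result of Gindikin--Krötz (\cite[Thm.~1.3]{GiKr02a}, quoted in the introduction) one has $\Xi \subset N^\C A \exp(i\omega) K^\C / K^\C = N^\C T_\omega \cdot p_0 = \Omega$. Since $\Xi$ is $G$-invariant, $\Xi = g\Xi \subset g\Omega$ for every $g \in G$, hence $\Xi \subseteq \bigcap_{g\in G} g\Omega$; as $\Xi$ is connected and contains $G/K$, it lies in the connected component $\Xi'$. This direction is essentially a citation plus $G$-invariance.

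For the reverse inclusion $\Xi' \subseteq \Xi$: here I would invoke the maximality theorem of Fels--Huckleberry (\cite{FeHu05}) mentioned in the introduction — $\Xi$ is the maximal $G$-invariant, Stein (and hyperbolic) domain in $G^\C/K^\C$ containing $G/K$ — together with the fact that $\Xi'$ is a $G$-invariant domain containing $G/K$ which is \emph{locally Stein} in the Stein domain $\Omega$ (it is cut out inside $\Omega$ as the component of an intersection of the Stein domains $g\Omega$, and a connected component of an intersection of domains each locally Stein in $\Omega$ is locally Stein in $\Omega$). By the Docquier--Grauert theorem (\cite{DoGr60}), a domain that is locally Stein in a Stein manifold is itself Stein; hence $\Xi'$ is Stein, and since $\Xi'$ is a $G$-invariant Stein domain containing $G/K$ in $G^\C/K^\C$, maximality forces $\Xi' \subseteq \Xi$. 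Combining the two inclusions gives $\Xi' = \Xi$.

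The main obstacle I anticipate is making the point-set topology honest: an arbitrary intersection of open sets is not open, so one must argue carefully that the connected component $\Xi'$ of $\bigcap_{g\in G} g\Omega$ through $G/K$ is genuinely an open subset of $G^\C/K^\C$ and is locally Stein in $\Omega$. The cleanest route is probably to use properness of the extended $G$-action on $\Xi$ (from \cite{AkGi90}) to show that near any point of $\Xi'$ only finitely many of the translates $g\Omega$ are ``active,'' reducing locally to a finite intersection of Stein domains — or, alternatively, to cite directly that $\bigcap_{g\in G} g\Omega$ is open (this is implicit in the Gindikin--Krötz / Fels--Huckleberry circle of ideas). Once openness and local Steinness of $\Xi'$ in $\Omega$ are in hand, the rest is an application of Docquier--Grauert plus the Fels--Huckleberry maximality statement, both of which are quoted in the introduction.
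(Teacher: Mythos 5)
First, a point of comparison: the paper offers no proof of Theorem~\ref{Id} at all --- it is stated verbatim as a quotation of \cite[Thm.~1.3]{GiKr02a}, so there is no internal argument to measure your proposal against. Judged on its own terms, your attempt reduces one half to the citation and leaves a genuine gap in the other half. For the inclusion $\Xi\subseteq\bigcap_{g\in G}gN^\C T_\omega\cdot p_0$ you invoke precisely the Gindikin--Kr\"otz containment $\Xi\subset N^\C A\exp(i\omega)K^\C/K^\C$, which is the substantive content of the theorem being proved; the passage from that containment to the intersection statement is indeed just $G$-invariance of $\Xi$, so this direction is a restatement of the citation rather than an argument.

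The reverse inclusion is where the real problem lies. You deduce $\Xi'\subseteq\Xi$ from the claim that $\Xi$ is the maximal $G$-invariant \emph{Stein} domain containing $G/K$, but that is not what Fels--Huckleberry prove, and as stated it is false: $G^\C/K^\C$ itself is Stein (since $K^\C$ is reductive the quotient is affine), $G$-invariant, and contains $G/K$, yet it strictly contains $\Xi$. The maximality in \cite{FeHu05} is with respect to Steinness \emph{together with Kobayashi hyperbolicity}, hyperbolicity being the essential hypothesis; to run your argument you would therefore have to prove that $\Xi'$ is Kobayashi hyperbolic. Your construction does not supply this: the ambient tube $N^\C T_\omega\cdot p_0\cong N^\C\times T_\omega$ contains copies of $\C^l$ and is not hyperbolic, and establishing hyperbolicity of the intersection over all $g\in G$ is essentially as hard as the theorem itself. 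The openness issue you flag is also real and unresolved --- $\Xi$ is not relatively compact in the tube, and no mechanism is given for reducing $\bigcap_{g\in G}g\Omega$ locally to a finite intersection --- but it is secondary to the failure of the maximality step.
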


\noindent
Then, as a consequence of Lemma \ref{MULT} and the above theorem,
$\Xi$ is an $N\!A$-invariant domain of $N^\C T_\omega$,
therefore one has
\begin{cor}\label{STEINTUBE} The crown domain is biholomorphic to a Stein tube in $N^\C\rtimes A^\C$ contained in $N^\C\times T_\omega$.
\end{cor}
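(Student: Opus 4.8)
The plan is to combine Theorem~\ref{Id}, which exhibits $\Xi$ as the connected component containing $G/K$ of $\bigcap_{g\in G}gN^\C T_\omega\cdot p_0$, with the equivariant biholomorphism of Lemma~\ref{MULT}.

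First I would observe that, taking $g=e$ in the intersection of Theorem~\ref{Id}, one has $\Xi\subset N^\C T_\omega\cdot p_0$. By Lemma~\ref{MULT} the multiplication map $\psi\colon N^\C\times T_\omega\to N^\C T_\omega\cdot p_0$, $(n,a)\mapsto na\cdot p_0$, is an $N\!A$-equivariant biholomorphism onto a Stein, $N\!A$-invariant domain of $G^\C/K^\C$. Since $\Xi$ is open in $G^\C/K^\C$ and contained in $N^\C T_\omega\cdot p_0$, the set $D:=\psi^{-1}(\Xi)$ is an open subset of $N^\C\times T_\omega$. As the underlying manifold of $N^\C\rtimes A^\C$ is $N^\C\times A^\C$ and $T_\omega=A\exp(i\omega)\subset A^\C$, the set $N^\C\times T_\omega$, and therefore $D$, is an open subset of the complex Lie group $N^\C\rtimes A^\C$.

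Next I would check that $D$ is a \emph{tube}, i.e. a connected, $N\!A$-invariant domain of $N^\C\rtimes A^\C$. Connectedness is immediate, since $\Xi$ is connected and $\psi$ is a homeomorphism. For $N\!A$-invariance, recall that $\Xi$ is $G$-invariant, in particular $N\!A$-invariant; since $\psi$ intertwines the $N\!A$-actions, $D$ is preserved by the left $N\!A$-action on $N^\C\rtimes A^\C$. Finally, $\Xi$ is Stein by \cite{BHH03}, and $D$ is biholomorphic to $\Xi$, hence Stein. Altogether $\Xi\cong D$ is a Stein tube in $N^\C\rtimes A^\C$ with $D\subset N^\C\times T_\omega$, which is the assertion.

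This argument is essentially bookkeeping once Lemma~\ref{MULT} and Theorem~\ref{Id} are available; the only slightly delicate points are the identification of $N^\C\times T_\omega$ as a genuine open subset of the semidirect product $N^\C\rtimes A^\C$ and the use of the $N\!A$-equivariance in Lemma~\ref{MULT} to transfer the $N\!A$-invariance of $\Xi$ to $D$. I do not expect any real obstacle here: the substance of the corollary is carried by the two cited results together with the Steinness of $\Xi$.
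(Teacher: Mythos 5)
Your argument is correct and is precisely the one the paper intends: Theorem~\ref{Id} (with $g=e$) places $\Xi$ inside $N^\C T_\omega\cdot p_0$, and the equivariant biholomorphism of Lemma~\ref{MULT} transports the Stein, $N\!A$-invariant domain $\Xi$ to a Stein tube in $N^\C\rtimes A^\C$ contained in $N^\C\times T_\omega$. The paper states the corollary as an immediate consequence of these two results without further detail, so your write-up is just a more explicit version of the same proof.
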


\medskip
In the next example we give an explicit realization of the tube $N^\C T_\omega\cdot p_0$ of $G^\C/K^\C$ which, by the above corollary, contains the crown domain.
\medskip

\begin{ex}
\label{EXAMPLE} Consider the maximal compact subgroup $K=SO(3)$
of  $G=SL(3,\R)$ and the associated Riemannian symmetric space of
the non-compact type $G/K$.
Let $\k\oplus\p$ be the Cartan's decomposition of the Lie algebra $\g=\mathfrak{sl}(3,\R)$, where
$\k=\mathfrak{so}(3)$ and $\p$ consists of all symmetric matrices of trace zero. Consider the maximal
abelian subspace $\a=\{d(\lambda_1,\lambda_2,\lambda_3)\in \d(3,\R)|\sum_k\lambda_k=0\}$ of $\p$, where $d(\lambda_1,\lambda_2,\lambda_3)$ denotes the real diagonal matrix whose entries are $\lambda_1,\lambda_2,\lambda_3$.
Let $\varepsilon_k:\a\rightarrow\R$ be defined by
$\varepsilon_k\big(d(\lambda_1,\lambda_2,\lambda_3)\big):=\lambda_k$,
for $k=1,2,3$,
and choose the set of positive roots
$\Sigma^+=\{\varepsilon_k-\varepsilon_h:\;1\leq k<h\leq 3\}$. The associated Iwasawa decomposition is given by $G=KAN$, where $K=SO(3)$,
$A=\{d(e^{\lambda_1},e^{\lambda_2},e^{\lambda_3}):\sum_k\lambda_k=0\}$
and $N=U(3,\R)$ consists of the real unipotent, upper
triangular matrices. The cell of the crown domain is given by
$$\omega=\Big\{d(\lambda_1,\lambda_2,\lambda_3)\in\a:\;
|\lambda_k-\lambda_h|<\pi/2;\;1\leq k,h\leq 3,\;\sum_k\lambda_k=0\Big\}
\atop \cong\Big\{(\lambda_2,\lambda_3)\in\R^2:\,|2\lambda_2+\lambda_3|<\pi/2,
|2\lambda_3+\lambda_2|<\pi/2,|\lambda_2-\lambda_3|<\pi/2\Big\}\, .$$

\begin{figure}[!hH]
 \centering
 \input{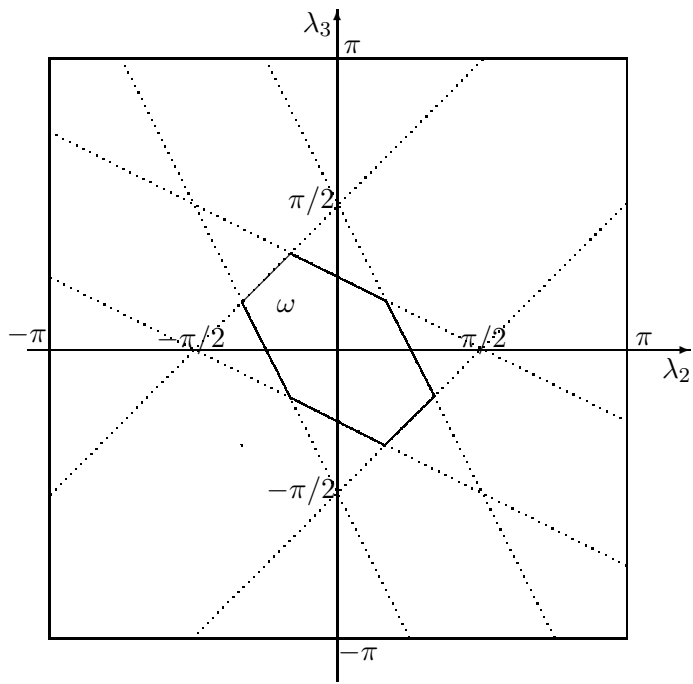}
  \caption{The cell $\omega$ of the crown domain associated to $SL(3,\R)/SO(3)$.}\label{CELL}
\end{figure}

\smallskip
Consider the $G^\C$-action on the complex $3\times 3$ matrices given by $g\cdot A:=\,^t\!gAg$. The $G^\C$-orbit
through the identity $I$ consists of the complex unimodular symmetric
$3\times 3$-matrices $SM(3,\C)$. Moreover, since the isotropy group at $I$ is $K^\C$ one has $G^\C/K^\C \cong SM(3,\C)$.

We are interested in the tube $N^\C T_\omega \cdot I\subset SM(3,\C)$.
For this let
$n:=\left(
   \begin{array}{ccc}
     1 & \alpha & \beta \\
     0 & 1 &  \gamma \\
     0 & 0 & 1 \\
   \end{array}
 \right)$,
with $\alpha,\beta,\gamma\in\C$, be an element of $N^\C$,
and let $a:=d(\zeta_1,\zeta_2,\zeta_3)$,
with $|\arg(\zeta_h\zeta_k^{-1})|<\frac\pi2$ and $\zeta_1\zeta_2\zeta_3=1$,
be an element of $T_\omega$.
A straightforward computation shows that
$$na\cdot I=
\left(
  \begin{array}{ccc}
    \zeta_1^2+\alpha^2\zeta_2^2+\beta^2\zeta_3^2 & \alpha\zeta^2_2+\gamma\beta\zeta_3^2 & \beta\zeta^2_3 \\
    \alpha\zeta^2_2+\gamma\beta\zeta_3^2 & \zeta^2_2+\gamma^2\zeta^2_3 & \gamma\zeta_3^2 \\
    \beta\zeta^2_3 & \gamma\zeta_3^2 & \zeta_3^2 \\
  \end{array}
\right)
$$
Then, for $na\cdot I=(a_{ij})_{ij}$ one has
\begin{equation}\label{BIHOLOMORPHISM}
\left\{
  \begin{array}{ll}
    \zeta_3^2=a_{33} \\
    \zeta_2^2=a_{33}^{-1}(a_{22}a_{33}-a_{23}^2)  \\
    \zeta_1^2=(a_{22}a_{33}-a_{23}^2)^{-1}  \\
    \gamma=a_{33}^{-1}a_{23}  \\
    \beta=a_{33}^{-1}a_{13} \\
    \alpha=(a_{12}a_{33}-a_{23}a_{13})(a_{22}a_{33}-a_{23}^2)^{-1}
  \end{array}
\right.
\end{equation}

\noindent Therefore the tube $N^\C T_\omega \cdot I$ is contained in the subdomain $E$ of $SM(3,\C)$ consisting of the elements $(a_{ij})_{ij}$ satisfying the following inequalities
\begin{equation}\label{E}
\left\{%
\begin{array}{ll}
    a_{33}\neq0,\;a_{22}a_{33}-a_{23}^2\neq0\medskip\\
    \big|\arg\big(a_{33}^{-1}(a_{22}a_{33}-a_{23}^2)^2\big)\big|<\pi\medskip\\
    \big|\arg(a_{33}(a_{22}a_{33}-a_{23}^2)\big)\big|<\pi\medskip\\
    \big|\arg\big(a_{33}^{-2}(a_{22}a_{33}-a_{23}^2\big)\big|<\pi\\
\end{array}%
\right.
\end{equation}

\noindent In fact $E$ coincides with $N^\C T_\omega\cdot I$. For this let $(a_{ij})_{ij}$ in $E$ and note that for all $(l,m)\in\Z^2\backslash\{(0,0)\}$ one has $\omega\cap(\omega+(l\pi,m\pi))=\emptyset$. Then the conditions in (\ref{E}) imply that there exist unique $(\lambda_2, \lambda_3)\in\omega$ such that $2\lambda_2=\arg(a_{33}^{-1}(a_{22}a_{33}-a_{23}^2))$ and $2\lambda_3=\arg(a_{33})$. Define $\zeta_2=\rho_2e^{i\lambda_2}$
and $\zeta_3=\rho_3e^{i\lambda_3}$, with $\rho_2^2=|\,a_{33}^{-1}(a_{22}a_{33}-a_{23}^2)|$ and $\rho_3^2=|\,a_{33}|$. Then, for $\zeta_3:=(\zeta_1\zeta_2)^{-1}$ and $\alpha, \beta, \gamma$ be defined by the equalities in (\ref{BIHOLOMORPHISM}),
 one has $na\cdot I=(a_{ij})_{ij}$, where $a:=d(\zeta_1,\zeta_2,\zeta_3)$ and
$n:=\left(
   \begin{array}{ccc}
     1 & \alpha & \beta \\
     0 & 1 &  \gamma\\
     0 & 0 & 1 \\
   \end{array}
 \right)$.
 Hence, $N^\C T_\omega\cdot I=E$.

\end{ex}

\section{The automorphism group of the crown domain.}\label{TABLES}
\noindent

Here we discuss the group $\Aut(\Xi)$ of biholomorphisms
of the crown domain. Since every isometry $f$ of $G/K$ extends to a biholomorphism of $\Xi$, it follows that the isometry group $\Iso(G/K)$ is contained in $\Aut(\Xi)$. By \cite[Thm.~6]{BHH03} only two cases arise:
either $\Xi$ is a Hermitian symmetric space of a larger group or $\Aut(\Xi)\cong\Iso(G/K)$.
In the latter case one says that $\Xi$ is \emph{rigid}.

If $G/K$ is Hermitian symmetric itself, then
 $\Xi$ is biholomorphic to the product
$G/K \times \overline {G/K}$, where $\overline {G/K}$ is the
Hermitian symmetric space with the opposite complex structure
(see  \cite[p.~5]{BHH03}, cf. \cite{FHW05}).
In this case $G/K$ is contained in $\Xi$ as the totally-real diagonal
$\{(gK, gK) \in G/K \times \overline {G/K}: g \in G\}$ and
the $G$-action on $\Xi$ is the diagonal one.
In particular $\Xi$ is a (reducible) Hermitian symmetric space.
There are also cases when
$\Xi$ turns out to be irreducible, as the following tables show
(cf. Table~1 in \cite{BHH03}).

\begin{table}[!hH]
\centering
\begin{tabular}{p{4.5cm}  l}
  \hlinewd{.7pt}\noalign{\medskip}
  \multicolumn{2}{c}{$G/K$ Hermitian symmetric \
  ($\Xi\cong G/K\times\overline{G/K}$)} \\[3pt]
  \hline\noalign{\medskip}
  \multicolumn{2}{l}{$\hspace{3cm} SU(p,q)/S(U_p\times U_q)$} \\[3pt]
  \multicolumn{2}{l}{$\hspace{3cm} SO_o(p,2)/(SO(p)\times SO(2))$} \\[3pt]
  \multicolumn{2}{l}{$\hspace{3cm} SO^*(2n)/U(n)$} \\[3pt]
  \multicolumn{2}{l}{$\hspace{3cm} Sp(n,\R)/U(n)$} \\[3pt]
  \multicolumn{2}{l}{$\hspace{3cm} (\e_{6(-14)},\mathfrak{so}(10)+\R$)} \\[3pt]
  \multicolumn{2}{l}{$\hspace{3cm} (\e_{7(-25)},\e_6+\R$)} \\[3pt]
  \hlinewd{.7pt}\noalign{\medskip}
  \multicolumn{2}{c}{$G/K$ not Hermitian symmetric with $\Xi$ Hermitian symmetric} \\[3pt]
  \hline\noalign{\smallskip}
  $SO_o(p,1)/SO(p),\ \ \  p>2$ & $\Xi\cong SO_o(p,2)/(SO(p)\times SO(2))$ \\[3pt]
  $Sp(p,q)/(Sp(p)\times Sp(q))$ & $\Xi\cong SU(2p,2q)/S(U_{2p}\times U_{2q})$ \\[3pt]
  $(\f_{4(-20)},\mathfrak{so}(9))$ & $\Xi\cong(\e_{6(-14)},\mathfrak{so}(10)+\R)$ \\[3pt]
  \hlinewd{.7pt}
\end{tabular}
\caption{Riemannian symmetric spaces of the non-compact type with
Hermitian symmetric crown domain.}
\label{HS}
\end{table}
\begin{table}[!hH]
\centering
\begin{tabular}{l}
  \hlinewd{.7pt}\noalign{\medskip}
  \multicolumn{1}{c}{G/K with $\Xi$ rigid} \\[3pt]
  \hline\noalign{\medskip}
  $SL(n,\R)/SO(n),\quad n>2$ \\[3pt]
  $SO_o(p,q)/(SO(p)\times SO(q)),\quad p,\;q>2$ \\[3pt]
  $SU^*(2n)/Sp(n)$ \\[3pt]
  $SL(n,\C)/SU(n),\quad n>2$ \\[3pt]
  $SO(n,\C)/SO(n),\quad n>3$\\[3pt]
  $Sp(n,\C)/Sp(n),\quad n>1$\\[3pt]
  All the other exceptional cases\\[3pt]
  \hlinewd{.7pt}
\end{tabular}
\caption{Riemannian symmetric spaces of the non-compact type with
rigid crown domain.}
\label{RIGID}
\end{table}

In the above tables there are some overlaps.
For the isomorphisms between irreducible Riemannian symmetric spaces
of low dimension we refer to \cite[p.~519]{Hel01}.


\section{Quotients of  crown domains by cyclic groups.}
\noindent

Let $X$ be a Stein manifold endowed with a proper action of a
discrete subgroup $\Gamma$ of $\Aut(X)$. In general one cannot expect the quotient $X/\Gamma$ to be Stein (cf. examples in \cite[Sect.~3]{Win90}, \cite{Oel92}, \cite[Sect.~5]{MiOe09}).
In some special cases it is possible to give necessary and/or sufficient
conditions so that $X/\Gamma$ has nice properties, e.g. it is Kähler
(see \cite{HuOe86}, \cite{Loe85}),
 Stein (cf. \cite{GiHu78},
\cite{Loe85}, \cite{HuOe86}, \cite{OeRi88}, \cite{MiOe09},
\cite{Mie10}). Of course if $\Gamma$ is finite, then the space
$X/\Gamma$ is Stein by \cite[Thm.~1,Ch.~V]{GrRe79}
(cf. \cite{Hei91}).

Here we consider the case  of an infinite cyclic
group $\Gamma \cong \Z$ acting properly by biholomorphisms
on a Stein domain $X$ of $\C^n$.
In this situation there exist positive results.
For instance, we already recalled  C. Miebach's result (\cite{Mie10}), stating that if $X$ is a bounded, homogeneous domain of $\C^n$ the quotient $X/\Z$ is Stein.
The quotient is also Stein when $X$ is a simply connected domain
of $\,\C^2$ and the $\Z$-action is induced by a proper $\R$-action
(see \cite{MiOe09}).
In fact, under the latter assumption, we are not aware of any
example of a domain $X$ of $\C^n$ such that
$X/\Z$ is not Stein for $n>2$ (cf. \cite[Sect.~5]{MiOe09}).

Recall that every  crown domain $\Xi$  is biholomorphic to a (taut, by Prop.~\ref{TAUT})
Stein domain of $\C^n$, since it can be regarded as a Stein tube in the universal covering
$\,N^\C \rtimes \widetilde {A^\C}$ of $\,N^\C \rtimes A^\C\,$ and
$\,N^\C \rtimes \widetilde {A^\C}$ is biholomorphic to $\,\C^l \times \C^r$
(see the proof of Thm.~\ref{MAIN} below and cf. Cor.~\ref{STEINTUBE}
for more details).
In this section we will show that $\Xi/\Z$ is Stein for any proper $\Z$-action.
For those crown domains which are Hermitian symmetric spaces of a larger group (see Table~\ref{HS}), this fact follows directly from C. Miebach's result. Indeed, Hermitian symmetric spaces
are biholomorphic to bounded symmetric domains
via the Harish-Chandra embedding (see \cite{Ha-Ch56}). For rigid crown domains (see Table~\ref{RIGID}) the result yields new
interesting examples of non-homogenous,
Stein, taut domains of $\C^n$ with a large group of automorphisms such that the quotient by a proper action of a cyclic group is Stein.

\subsection{Reduction to the case of automorphisms in a maximal
split-solvable subgroup of $G$.}\label{SPLITSOLVABLE}
\noindent

Let $G/K$ be an irreducible Riemannian symmetric space of the non-compact type
with rigid crown domain $\Xi$.
In order to prove that the quotient $\Xi/\Gamma$ is Stein for every proper
action of an infinite cyclic group $\Gamma \cong \Z$ of biholomorphisms, we apply the argument pointed out by
 C. Miebach in  \cite[Sect.~3]{Mie10}
   showing that one can reduce to the case
of $\Gamma$ contained in a maximal split-solvable subgroup of
$G$.
 For the sake of completeness
we carry out the details in our situation.

 First note that it is enough to consider the case of
$\Gamma$ lying in  the connected component  of the identity
in $\Iso(G/K)$,
which is given by  $G/Z$, where $Z$ is the (finite) center of $G$
(cf. \cite[Thm.~4.1, Ch.~V]{Hel01}).
By Remark~\ref{KLEIN} the center $Z$ plays no role in
the geometry of $\Xi$, so we will assume that it is trivial.
Since $G/K$ is a Riemannian manifold, every isotropy subgroup
of $\Iso(G/K)$ is compact  (\cite[Thm.~2.4, Ch.~IV]{Hel01}). In particular
all isotropy subgroups have a finite number of connected components.
 As a consequence so does $\Iso(G/K)$,
being $G/K$  homogeneous with respect to the connected component
of the identity in $\Iso(G/K)$.
Thus $\Gamma/(\Gamma\cap G)$ is finite and consequently
$\Xi/\Gamma$ is Stein if and only if so is $\Xi/(\Gamma\cap G)$.

So let $\Gamma$ be contained in the connected
component $G$ of the neutral element  in $\Iso(G/K)$.
Consider an Iwasawa decomposition $N\!AK$ of $G$.
We want to reduce to the case
of $\Gamma$ lying in the split-solvable subgroup $N\!A$ of $G$
(cf. Rem.~\ref{NA}). For this let
$\gamma\in G$ be a generator of $\Gamma$ and consider its
Jordan-Chevalley decomposition $\gamma=\gamma_u\gamma_h\gamma_e$, where
$\gamma_u$ is unipotent, $\gamma_h$ is hyperbolic and $\gamma_e$ is elliptic
(cf. Prop.~\ref{JORDAN}). Set $\gamma':=\gamma_u\gamma_h$ and
let $\Gamma':=\langle\gamma'\rangle$ be the subgroup generated by
$\gamma'$. By $(iv)$ of Proposition~\ref{jc},
we may assume that $\Gamma'$ is contained
in $N\!A$. Moreover the exponential map of $N\!A$ is a diffeomorphism
(see Rem.~\ref{SPLIT}), implying that  $\Gamma'$
is  closed
in $N\!A$, and consequently in $G$.
Thus it acts properly and freely on the domain $\Xi$.
In particular the canonical projections $p:\Xi \to \Xi/\Gamma$
and $p':\Xi \to \Xi/\Gamma'$ are coverings.

Since $\gamma_e$ is conjugated to an element of $K$, the closure $T$
of the cyclic subgroup generated by $\gamma_e$ is a compact abelian
subgroup of $G$.  Note that  $\gamma_e$ and $\gamma'$ commute,
therefore   $T\Gamma'$ is a subgroup of $G$.
In fact, since  all the elements in the Jordan-Chevalley decomposition
 of $\Gamma$ commute, one has $T \times \Gamma \cong T\Gamma=T\Gamma' \cong T \times \Gamma'$.  Therefore the compact group
$T$ acts (properly) on both  $\Xi/\Gamma$
and $\Xi/\Gamma'$ and one has
 a commutative diagram

\begin{displaymath}
\xymatrix@R=1em{
           & \Xi \ar[rd]^{p'} \ar[ld]_p      &   \\
 \Xi/\Gamma  \ar[rd]_q &                     &  \Xi/\Gamma' \ar[ld]^{q'}\\
           & Y                         &         }
\end{displaymath}
where $Y:= (\Xi/\Gamma)/T= (\Xi/\Gamma')/T$ and the canonical projections $q$ and $q'$ are proper. One has (\cite[Prop. 3.6]{Mie10})

\begin{prop} Let $\Gamma$ be a cyclic discrete group acting properly by biholomorphisms on $\Xi$ and let $\Gamma'$ be the above defined infinite cyclic subgroup of $NA$. The complex manifold $ \Xi/\Gamma$ is Stein if and only if so is $ \Xi/\Gamma'$.
\end{prop}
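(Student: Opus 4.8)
The plan is to exploit the commutative diagram preceding the statement, in which the compact abelian group $T$ acts properly on both $\Xi/\Gamma$ and $\Xi/\Gamma'$ with common quotient $Y=(\Xi/\Gamma)/T=(\Xi/\Gamma')/T$, and to reduce the Steinness of $\Xi/\Gamma$ and of $\Xi/\Gamma'$ to a property of $Y$ alone. The key point is that $T$ is compact, so by the Grauert--Remmert / Heinzner theory of proper actions of compact (in fact reductive) groups, a complex space on which $T$ acts properly is Stein if and only if its quotient by $T$ is Stein (see \cite[Thm.~1,Ch.~V]{GrRe79}, cf.~\cite{Hei91}). Applying this to the proper $T$-actions on $\Xi/\Gamma$ and on $\Xi/\Gamma'$ gives: $\Xi/\Gamma$ is Stein $\iff$ $Y$ is Stein $\iff$ $\Xi/\Gamma'$ is Stein, which is exactly the assertion.

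In more detail, first I would note that $\Xi$ is Stein (it is the crown domain, by \cite{BHH03}), hence so are the unbranched quotients $\Xi/\Gamma$ and $\Xi/\Gamma'$ by proper, free actions of discrete groups only \emph{a priori} if one already knew Steinness — so one should not assume that; rather the argument is purely a comparison. The objects $\Xi/\Gamma$ and $\Xi/\Gamma'$ are complex manifolds because $\Gamma$ and $\Gamma'$ act freely and properly discontinuously (the freeness of $\Gamma'$ follows since $\Gamma'\subset NA$ is closed in $G$ and the $G$-action on $\Xi$ is proper, as recalled in the reduction above; freeness of $\Gamma$ is part of the hypothesis that the $\Gamma$-action is proper together with the fact that $\Gamma$ is infinite cyclic acting on the simply connected $\Xi$). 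The decomposition $T\times\Gamma\cong T\Gamma=T\Gamma'\cong T\times\Gamma'$ is what makes the two $T$-quotients literally the same space $Y$: an element of $\Xi$ modulo the abelian group $T\Gamma$ can be read either as ``first mod $\Gamma$, then mod $T$'' or as ``first mod $\Gamma'$, then mod $T$''. Thus $q$ and $q'$ in the diagram are the two quotient maps for the $T$-action, and they are proper.

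The one genuine input needed is the equivalence ``$X$ Stein $\iff$ $X/T$ Stein'' for a \emph{proper} action of a compact group $T$ on a reduced complex space $X$. For $T$ a \emph{compact Lie group} acting properly this is standard: properness plus compactness of the acting group give a geometric quotient $X/T$ which is a complex space, and Heinzner's version of the Grauert--Remmert theorem (\cite{Hei91}, building on \cite[Ch.~V]{GrRe79}) yields that $X$ is holomorphically convex (resp. Stein) precisely when $X/T$ is; the ``only if'' direction is the nontrivial one and uses that $\mathcal O(X)^{T}=\mathcal O(X/T)$ separates points and realizes $X/T$ as a closed analytic subvariety of some $\mathbb C^{m}$ by averaging over $T$. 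I expect this compact-group descent step to be the main obstacle to write down cleanly — specifically, checking that the action of $T$ on $\Xi/\Gamma$ really is proper (which follows from properness of the $T\Gamma$-action on $\Xi$, itself a consequence of properness of the $G$-action since $T\Gamma\subset G$ is closed) and quoting the right form of the Grauert--Remmert--Heinzner theorem for not-necessarily-finite compact groups. Everything else — the equality $Y=(\Xi/\Gamma)/T=(\Xi/\Gamma')/T$, the manifold structures, the properness of $q,q'$ — is bookkeeping already set up in the paragraphs preceding the statement.
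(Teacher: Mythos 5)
Your reduction to the common quotient $Y$ is the right starting point, but the ``one genuine input'' you rely on --- that for a proper action of a compact group $T$ on a complex space $X$ one has $X$ Stein if and only if $X/T$ is Stein --- is not a theorem in the generality you need, and this is where the argument breaks. Here $T$ is the closure of the group generated by the elliptic part $\gamma_e$, so in general it is a compact torus of \emph{positive} dimension. The orbit space $X/T$ by a positive-dimensional compact group of biholomorphisms is not a complex space (already $\C/S^1\cong[0,\infty)$), so the question ``is $Y$ Stein?'' does not even make sense. The Grauert--Remmert theorem you cite is for finite groups, and Heinzner's theory concerns the categorical quotient $X/\!\!/T^\C$ by the \emph{complexified} reductive group --- and here the $T^\C$-action need not preserve $\Xi/\Gamma$ at all, since $T^\C\subset G^\C$ has no reason to stabilize the crown. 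So the chain ``$\Xi/\Gamma$ Stein $\iff Y$ Stein $\iff \Xi/\Gamma'$ Stein'' cannot be set up as stated.

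The paper sidesteps this by never giving $Y$ a complex structure: it is used only as a topological quotient. The actual mechanism is that smooth $T$-invariant functions on $\Xi/\Gamma$ and on $\Xi/\Gamma'$ correspond bijectively (both pull back to $T\Gamma=T\Gamma'$-invariant functions on $\Xi$), that strict plurisubharmonicity is preserved under this correspondence because $p$ and $p'$ are coverings, and that the exhaustion property is preserved because $q$ and $q'$ are proper maps onto the same $Y$. One then invokes H\"ormander's characterization of Steinness by smooth strictly plurisubharmonic exhaustions, together with averaging over the compact group $T$, to conclude that each quotient is Stein precisely when it carries a $T$-invariant such exhaustion. Your write-up already contains all the geometric bookkeeping needed for this (the identification of the two $T$-quotients, properness of $q,q'$, the covering property of $p,p'$); what is missing is replacing the unavailable ``descent of Steinness to $X/T$'' by the transfer of invariant strictly plurisubharmonic exhaustions.
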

\begin{proof}
The proof is carried out by noting that given a smooth,
$T$-invariant function $f$ of $\Xi/\Gamma'$, there exists a unique smooth,
$T$-invariant function $f'$
on $\Xi/\Gamma$ such that $f(p(z))= f'(p'(z))$ for all $z$ in $\Xi$.
Since $p$ and $p'$ are coverings, the function $f'$ is strictly
plurisubharmonic if and only if so is $f$. Moreover, by using the commutativity of the above diagram and
the properness of the projections $q$ and $q'$, one checks that $f$ is an exhaustion if and only
if so is $f'$.
As a consequence $\Xi/\Gamma$ admits a smooth, $T$-invariant, strictly plurisubharmonic exhaustion if and only if so does $\Xi/\Gamma'$.
Since by Thm.~\cite[Thm.~5.2.10]{Hor66} and by integration over $T$
one has that  $\Xi/\Gamma$ is Stein if and only if it admits a smooth,
$T$-invariant, strictly plurisubharmonic exhaustion, this implies the statement.
For further  details we refer to the original proof
in \cite[Prop.~3.6]{Mie10}.
\end{proof}

\subsection{The main result.}

We first need the following lemma.

\begin{lemma}\label{D/Gamma} Let $\Gamma$ be a discrete group which acts freely and properly discontinuously on a Stein manifold $X$ and let $D$ be a $\Gamma$-invariant, Stein subdomain of $X$. If $X/\Gamma$ is Stein, then so is $D/\Gamma$.
\end{lemma}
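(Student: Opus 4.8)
The plan is to exhibit $D/\Gamma$ as a \emph{locally Stein} open subset of the Stein manifold $X/\Gamma$ and then to conclude by the classical theorem of Docquier and Grauert \cite{DoGr60}, in the same spirit as the reduction sketched in the introduction. Since $\Gamma$ acts freely and properly discontinuously on $X$, the quotient map $\pi\colon X\to X/\Gamma$ is an unbranched holomorphic covering; and since $D$ is $\Gamma$-invariant, its image $D/\Gamma:=\pi(D)$ is open in $X/\Gamma$ with $\pi^{-1}(D/\Gamma)=D$, so that $\pi$ restricts to a covering $D\to D/\Gamma$ as well. In particular $D/\Gamma$ is connected, and a point $z\in X$ lies in $D$ exactly when $\pi(z)\in D/\Gamma$.

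The core of the argument is to verify local Steinness of $D/\Gamma$ in $X/\Gamma$, i.e. that every point $p\in X/\Gamma$ has an open neighbourhood $W$ with $W\cap(D/\Gamma)$ Stein (the points not lying on the topological boundary of $D/\Gamma$ being harmless). First I would choose $W$ to be a coordinate ball around $p$ small enough to be contained in an evenly covered neighbourhood, hence itself evenly covered by $\pi$; picking one sheet $\widetilde W\subset X$ over $W$, the restriction $\pi|_{\widetilde W}\colon\widetilde W\to W$ is a biholomorphism, and $\widetilde W$, being biholomorphic to a ball, is Stein. Because $z\in\widetilde W$ lies in $D$ precisely when $\pi(z)\in D/\Gamma$, this biholomorphism identifies $W\cap(D/\Gamma)$ with $\widetilde W\cap D$. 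Thus everything reduces to showing that $\widetilde W\cap D$ is Stein.

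The step I expect to require care is exactly this last one: $\widetilde W$ and $D$ are two Stein open subsets of $X$, and in general the intersection of two Stein domains need not be Stein, so one must use that the ambient space is a manifold. Here one can argue via the diagonal: $\widetilde W\times D$ is Stein (a product of Stein manifolds), the diagonal $\Delta=\{(x,x):x\in X\}$ is a closed complex submanifold of $X\times X$, hence $\Delta\cap(\widetilde W\times D)$ is a closed complex submanifold of the Stein manifold $\widetilde W\times D$ and therefore Stein; composing with the (biholomorphic) first projection $\Delta\cap(\widetilde W\times D)\to\widetilde W\cap D$ shows that $\widetilde W\cap D$ is Stein, hence so is $W\cap(D/\Gamma)$. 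Having established that $D/\Gamma$ is a locally Stein open subset of the Stein manifold $X/\Gamma$, an application of \cite{DoGr60} yields that $D/\Gamma$ is Stein, which completes the proof.
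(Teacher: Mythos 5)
Your proof is correct and follows essentially the same route as the paper: both arguments verify that $D/\Gamma$ is locally Stein in the Stein manifold $X/\Gamma$ by passing to an evenly covered Stein neighbourhood of a boundary point and then invoke the theorem of Docquier and Grauert. The only difference is that you explicitly justify, via the diagonal in $X\times X$, why the intersection of the chosen sheet with $D$ is Stein --- a point the paper's proof uses but leaves implicit.
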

\begin{proof}
First recall that by a classical result of
F. Docquier and H. Grauert (\cite{DoGr60}) a domain $O$ in a Stein
manifold $Z$ is Stein if and only if it is locally Stein, i.e.
for every element $z$ of the boundary of $O$ there exists a neighborhood
$V$ of $z$ in $Z$ such that $V\cap O$ is Stein.
Hence, it is enough to prove that $D/\Gamma$ is locally Stein in $X/\Gamma$.

For this note that the canonical projection $\pi:X\longrightarrow X/\Gamma$ is a covering map and for $z\in \partial(D/\Gamma)$ choose $x\in\pi^{-1}(z)$.
Since  $D$ is $\Gamma$-invariant, $\pi^{-1}(\overline{D/\Gamma})=\overline{D}$, therefore $x\in\partial D$.
Let $U$ be a Stein neighborhood of $x$ such that $gU\cap U=\emptyset$
for every $g\in\Gamma\setminus\{e\}$.
Then $\pi:U\longrightarrow\pi(U)$ is a biholomorphism and $\pi(U)\cap (D/\Gamma)=\pi(U\cap D)$ is Stein.
Therefore $D/\Gamma$ is locally Stein in $X/\Gamma$, implying the
statement.
\end{proof}

\noindent Our main result is

\begin{teo}\label{MAIN}
Let $G/K$ be an irreducible Riemannian symmetric
space of the non-compact type and
let $\Xi$ be the associated  crown domain.
Then $\Xi/\Z$ is a Stein manifold  for every proper $\Z$-action .
\end{teo}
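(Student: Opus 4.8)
The plan is to split into the two cases for $\Aut(\Xi)$ provided by \cite[Thm.~6]{BHH03}, after reducing to an effective infinite cyclic action. Indeed, a proper $\Z$-action with nontrivial kernel has finite image, in which case $\Xi/\Gamma$ is Stein by \cite[Thm.~1,Ch.~V]{GrRe79}, so we may assume $\Gamma\cong\Z$ acts effectively and properly discontinuously on $\Xi$. If $\Xi$ is a Hermitian symmetric space of a larger group (Table~\ref{HS}), then $\Xi$ is itself Hermitian symmetric of the non-compact type, hence biholomorphic to a bounded symmetric domain via the Harish-Chandra embedding (\cite{Ha-Ch56}); in particular it is a bounded homogeneous domain of $\C^n$, and C. Miebach's theorem (\cite{Mie10}) gives that $\Xi/\Gamma$ is Stein. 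So the substantial case is the rigid one, $\Aut(\Xi)=\Iso(G/K)$ (Table~\ref{RIGID}), which I treat next.

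Assume $\Xi$ is rigid. The first step is to invoke the reduction of Subsection~\ref{SPLITSOLVABLE}: using that $\Gamma/(\Gamma\cap G)$ is finite together with the Jordan--Chevalley decomposition of a generator of $\Gamma\cap G$ (Propositions~\ref{JORDAN} and~\ref{jc}), one replaces $\Gamma$ by an infinite cyclic subgroup $\Gamma'=\langle\gamma'\rangle$ of the maximal split-solvable subgroup $S:=N\!A$ of $G$, acting freely and properly on $\Xi$, with the property that $\Xi/\Gamma$ is Stein if and only if $\Xi/\Gamma'$ is. It therefore suffices to prove that $\Xi/\Gamma'$ is Stein. The second step is to realize $\Xi$ as an $S$-invariant Stein subdomain of the universal complexification $S^\C$ of $S$. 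By Lemma~\ref{MULT} the map $(n,a)\mapsto na\cdot p_0$ identifies $N^\C\times T_\omega$ biholomorphically and $S$-equivariantly with the Stein domain $N^\C T_\omega\cdot p_0\subset G^\C/K^\C$, which contains $\Xi$ (Corollary~\ref{STEINTUBE}). Since $\exp$ is injective on $2i\omega$ (proof of Lemma~\ref{MULT}), the tube $T_\omega=A\exp(i\omega)$ is simply connected and lifts biholomorphically to the universal cover $\widetilde{A^\C}\cong\C^r$ of $A^\C$; hence $N^\C\times T_\omega$, and with it $\Xi$, embeds as an $S$-invariant Stein subdomain of $S^\C=N^\C\rtimes\widetilde{A^\C}$, a group biholomorphic to $\C^l\times\C^r$. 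In particular $\Gamma'\subset S$ acts freely and properly discontinuously on the Stein manifold $S^\C$, leaving $\Xi$ invariant.

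The final step concludes. The group $\Gamma'\cong\Z$ is abelian; it is therefore a nilpotent discrete subgroup of $S$ in the sense of Proposition~\ref{S/Z QUOTIENT}, since, writing $\gamma'=\exp X$ with $X\in\n\oplus\a$ (the exponential of $S$ being a diffeomorphism, Remark~\ref{SPLIT}), it is contained in the connected abelian---hence nilpotent---subgroup $\{\exp(tX):t\in\R\}$ of $S$. So $S^\C/\Gamma'$ is Stein by Proposition~\ref{S/Z QUOTIENT}. Applying Lemma~\ref{D/Gamma} with ambient Stein manifold $S^\C$, discrete group $\Gamma'$, and $\Gamma'$-invariant Stein subdomain $D=\Xi$, we obtain that $\Xi/\Gamma'$ is Stein, hence so is $\Xi/\Gamma$.

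The step I expect to require the most care is the second. The complexification appearing in Lemma~\ref{MULT} is $A^\C\cong(\C^*)^r$, which is \emph{not} simply connected and hence not the universal complexification of $A$; the group to which Proposition~\ref{S/Z QUOTIENT} applies is $S^\C=N^\C\rtimes\widetilde{A^\C}$, built from the universal cover $\widetilde{A^\C}\cong\C^r$. One must verify that the realization of $\Xi$ inside $N^\C T_\omega\cdot p_0$ lifts unambiguously to a subdomain of the simply connected group $S^\C$, and that the left $S$-action lifts compatibly; this rests precisely on the injectivity of $\exp|_{2i\omega}$ established in Lemma~\ref{MULT}. Once this realization is in hand, the theorem is the synthesis of the reduction of Subsection~\ref{SPLITSOLVABLE}, Proposition~\ref{S/Z QUOTIENT} (ultimately Loeb's Theorem~\ref{LOEB}), and the Docquier--Grauert local-Steinness criterion underlying Lemma~\ref{D/Gamma}.
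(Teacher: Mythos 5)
Your proposal is correct and follows essentially the same route as the paper: reduction to an infinite cyclic subgroup of $N\!A$ via Subsection~\ref{SPLITSOLVABLE}, realization of $\Xi$ as a Stein tube in the universal complexification $N^\C\rtimes\widetilde{A^\C}$, Steinness of the quotient of that complexification by Proposition~\ref{S/Z QUOTIENT}, and the Docquier--Grauert argument of Lemma~\ref{D/Gamma}. The only cosmetic difference is that you justify the lift to the universal cover via the simple connectivity of $T_\omega$ (using the injectivity of $\exp|_{2i\omega}$), whereas the paper lifts $\Xi$ directly using its own simple connectivity; both are valid.
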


\begin{proof} If the crown domain $\Xi$ is not rigid then
it is biholomorphic to a bounded symmetric domain and the statement follows from C. Miebach's result (\cite{Mie10}).
So consider the case of $\Xi$ rigid.
Let $NAK$ be an Iwasawa decomposition of
$G$. By the argument in Section~\ref{SPLITSOLVABLE},
one may assume that $\Z$ is contained in the
split-solvable subgroup $N\!A \cong N \rtimes A$ of $G$.
Consider the universal coverings
$\widetilde {A^\C}$ of $A^\C$ and
$N^\C \rtimes
\widetilde {A^\C}$ of $N^\C \rtimes A^\C$, respectively.
Since the Lie group $N \rtimes A$ is simply connected, it lifts to a real form
of $N^\C \rtimes \widetilde {A^\C}$
(in fact $N^\C \rtimes \widetilde {A^\C}$
is the universal complexification of $N \rtimes A$).
Moreover, the crown domain $\Xi$ is simply connected
and by Corollary~\ref{STEINTUBE} it is biholomorphic to a Stein,
$N \rtimes A$-invariant
domain of $N^\C \rtimes A^\C$.
Thus it lifts to a Stein tube in $N^\C \rtimes \widetilde {A^\C}$.
One has a commutative diagram
$$
\xymatrix{
  \Xi\;\ar[d] \ar@{^{(}->}[r] & N^\C\rtimes \widetilde{A^\C}  \ar[d] \\
  \Xi/\Z\; \ar@{^{(}->}[r] & (N^\C\rtimes \widetilde{A^\C})/\Z\quad,}
$$
where the horizontal arrows are the natural inclusions and the
vertical arrows are the canonical covering maps.
By Remark~\ref{NA} the group
$N \rtimes A$ is split-solvable and from
Proposition~\ref{S/Z QUOTIENT} it follows that
$(N^\C\rtimes \widetilde{A^\C})/\Z$ is Stein.
Finally, being the crown domain $\Xi$ a Stein tube in $N^\C\rtimes \widetilde{A^\C}$, Lemma~\ref{D/Gamma} applies to show that
$\Xi/\Z$ is Stein, as wished.
\end{proof}

As suggested to us by C. Miebach,  a similar
argument as in the above proof applies to the case of
discrete nilpotent subgroups of $N\!A$.

\begin{prop}\label{SUGG}
Let $\Gamma$ be any discrete, nilpotent subgroup of $N\!A$.
Then $\Xi/\Gamma$ is Stein.
\end{prop}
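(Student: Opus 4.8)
The plan is to mimic the proof of Theorem~\ref{MAIN} almost verbatim, replacing the cyclic group $\Z$ by the discrete nilpotent group $\Gamma$ throughout. First I would pass, exactly as in section~\ref{SPLITSOLVABLE}, to the universal coverings $\widetilde{A^\C}$ of $A^\C$ and $N^\C\rtimes\widetilde{A^\C}$ of $N^\C\rtimes A^\C$. Since $N\rtimes A$ is simply connected it lifts to a real form of $N^\C\rtimes\widetilde{A^\C}$, which is in fact the universal complexification of $N\rtimes A$; and since $\Gamma$ is a discrete subgroup of $N\rtimes A$, it sits inside this real form, hence acts freely and properly discontinuously on $N^\C\rtimes\widetilde{A^\C}$. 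By Corollary~\ref{STEINTUBE} the crown domain $\Xi$ is biholomorphic to a Stein, $N\rtimes A$-invariant (hence in particular $\Gamma$-invariant) tube in $N^\C\rtimes\widetilde{A^\C}$, and being simply connected it lifts to such a tube.

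Next I would invoke Proposition~\ref{S/Z QUOTIENT}. By Remark~\ref{NA} the group $N\rtimes A$ is split-solvable, and $\Gamma$ is a discrete nilpotent subgroup of it; Proposition~\ref{S/Z QUOTIENT} then gives directly that $(N^\C\rtimes\widetilde{A^\C})/\Gamma$ is Stein. Finally, since $\Xi$ sits inside $N^\C\rtimes\widetilde{A^\C}$ as a $\Gamma$-invariant Stein subdomain and the ambient quotient is Stein, Lemma~\ref{D/Gamma} applies and yields that $\Xi/\Gamma$ is Stein. The commutative diagram
$$
\xymatrix{
  \Xi\;\ar[d] \ar@{^{(}->}[r] & N^\C\rtimes \widetilde{A^\C}  \ar[d] \\
  \Xi/\Gamma\; \ar@{^{(}->}[r] & (N^\C\rtimes \widetilde{A^\C})/\Gamma}
$$
with horizontal arrows the inclusions and vertical arrows the covering maps makes the bookkeeping transparent.

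The only genuinely new point compared with Theorem~\ref{MAIN} is that here there is no reduction step needed: $\Gamma$ is assumed from the outset to lie in $N\!A$, so the Jordan--Chevalley/elliptic-part machinery of section~\ref{SPLITSOLVABLE} is bypassed entirely, and one does not even need $\Gamma$ to be cyclic — discreteness and nilpotence inside $N\!A$ are exactly the hypotheses under which Proposition~\ref{S/Z QUOTIENT} delivers Steinness of the ambient quotient. I expect no real obstacle: the substantive work has already been done in Proposition~\ref{S/Z QUOTIENT} (via Loeb's theorem and Engel's theorem) and in Lemma~\ref{D/Gamma} (via Docquier--Grauert). The one thing to check carefully is that $\Gamma$ acts freely and properly discontinuously on the full tube $N^\C\rtimes\widetilde{A^\C}$, not merely on $\Xi$; but this is immediate since $\Gamma$ is a discrete subgroup of the real form $N\rtimes A$ acting on its own (universal) complexification by left translations, which is automatically free and properly discontinuous.
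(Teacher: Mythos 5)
Your proposal is correct and follows exactly the route the paper takes: the paper's own proof is a two-line appeal to Proposition~\ref{S/Z QUOTIENT} (for Steinness of $(N^\C\rtimes\widetilde{A^\C})/\Gamma$) followed by Lemma~\ref{D/Gamma}, with the lifting set-up borrowed verbatim from the proof of Theorem~\ref{MAIN}, which is precisely what you spell out. Your additional remarks (no reduction step needed, freeness and proper discontinuity of the $\Gamma$-action on the ambient complexification) are accurate and merely make explicit what the paper leaves implicit.
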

\begin{proof}
By Proposition~\ref{S/Z QUOTIENT}, the quotient  $(N^\C\times \widetilde{A^\C})/\Gamma$ is a Stein manifold. Then Lemma~\ref{D/Gamma} implies that $\Xi/\Gamma$ is Stein, as wished.
\end{proof}

\begin{small}

\bigskip\noindent {\sc Dipartimento di Matematica, Università di Roma 'Tor Vergata', via della Ricerca Scientifica 1, 00133 Roma, Italy}.\\
\emph{E-mail address}: vitali@mat.uniroma2.it

\end{small}

\end{document}